\documentclass[final]{siamltex}

\usepackage{hyperref}

\usepackage{xspace}
\usepackage{subfig}
\usepackage{mathrsfs,stmaryrd}
\usepackage{url}
\usepackage{xcolor}
\usepackage{enumitem}
\usepackage{tikz}
\usetikzlibrary{decorations.pathreplacing}
\usetikzlibrary{patterns}
\usepackage{amssymb,amsmath}

\newcommand\xlp{$L$-periodic w.r.t. $x$}
\newcommand\Ax{A^x}
\newcommand\Bx{B^x}
\newcommand\Rx{R^x}

\newcommand{\kalm}[2]{ \left[\, #1 \, : \, #2 \, \right] }

\newcommand\gln{\mathrm{GL}_n(\R)}

\newcommand{\field}[1]{\ensuremath{\mathbb{#1}}}

\newcommand{\R}{\field{R}\xspace}

\newcommand{\N}{\field{N}\xspace}

\newcommand\inter[1]{\llbracket #1\rrbracket}

\newcommand\floor[1]{\lfloor#1\rfloor}
\newcommand\ceil[1]{\lceil #1\rceil}

\newcommand{\cont}{\Theta}

\newcommand{\opD}{\mathscr{D}}
\newcommand{\opL}{\mathscr{L}}
\newcommand{\opM}{\mathscr{M}}
\newcommand{\setA}{\mathcal{A}}
\newcommand{\setR}{\mathcal{R}}
\newcommand{\setB}{\mathcal{B}}

\newcommand\kmin{k_\mathrm{min}}
\newcommand\kmax{k_\mathrm{max}}

\newcommand{\eqdef}{\overset{\mathrm{def}}{=}}

\newcommand\Id{\mathrm{Id}}

\newcommand\ddt{\frac{d}{dt}}

\newcommand\car{X}
\newcommand\excar{\ext{\car}}

\newcommand\ball{\mathrm{B}}

\newcommand\ext[1]{\overline{#1}}

\newcommand\Tau{\mathcal{T}}

\renewcommand\epsilon{\varepsilon}
\newcommand{\rpourregularite}{r}
\newcommand{\spourregularite}{s}

\def\ds{\displaystyle}
\def\norm#1{\|#1\|}

\newtheorem{remark}{Remark}

\newcommand{\card}[1]{\,\mathrm{card}\,#1}

\newcommand{\abs}[1]{ \left|#1\right| }

\newcommand{\ens}[1]{ \left\{#1\right\} }

\newcommand{\adh}[1]{ \overline{#1} }

\title{Internal controllability of first order quasilinear hyperbolic systems with a reduced number of controls}

\author{Fatiha Alabau-Boussouira, Jean-Michel Coron and Guillaume Olive\thanks{Jean-Michel Coron and Guillaume Olive were supported by the ERC advanced grant 266907 (CPDENL) of the 7th Research Framework Programme (FP7)} }

\begin{document}

\maketitle

\begin{abstract}
In this paper we investigate the exact controllability of $n \times n$ first order one-dimensional quasilinear hyperbolic systems by $m<n$ internal controls that are localized in space in some part of the domain.
We distinguish two situations.
The first one is when the equations of the system have the same speed.
In this case, we can use the method of characteristics and obtain a simple and complete characterization for linear systems.
Thanks to a linear test this also provides some sufficient conditions for the local exact controllability around the trajectories of semilinear systems.
However,  when the speed of the equations are not anymore the same, we see that we encounter the problem of loss of derivatives if we try to control quasilinear systems with a reduced number of controls.
To solve this problem, as in a prior article by J.-M. Coron and P. Lissy on a Navier-Stokes control system, we first use  the notion of algebraic solvability due to M. Gromov. However, in contrast with this prior article where a standard fixed point argument
could be used to treat the nonlinearities,  we use here a fixed point theorem of Nash-Moser type due to M. Gromov in order to handle the problem of loss of derivatives.
\end{abstract}

\begin{keywords}
Quasilinear hyperbolic systems, exact internal controllability, controllability of systems, algebraic solvability.
\end{keywords}

\begin{AMS}
35L50, 93B05, 93C10.
\end{AMS}

\pagestyle{myheadings}
\thispagestyle{plain}
\markboth{F.~Alabau-Boussouira, J.-M.~Coron and G.~Olive}{Controllability of first order hyperbolic systems}

\section{Introduction}

In this paper we investigate the exact controllability of $n \times n$ first order one-dimensional quasilinear hyperbolic systems by $m<n$ internal controls that are localized in space in some part of the domain.
While the controllability of quasilinear hyperbolic systems by boundary controls has been intensively studied, \cite{Cir,LR2002,LR2003,Wan,Zha,LRW}, to our knowledge there are no equivalent results for the internal controllability.
On the other hand, the controllability of systems of PDEs with a reduced number of controls has been a challenging problem for the last decades, see for instance \cite{Ala0,Ala1} for the first results on linear hyperbolic systems (see also \cite{Dag,ABL,Ala2,DLL,Ala}) and \cite{Zha} for quasilinear hyperbolic systems, the survey \cite{AKBGBdTsurvey} (and the references inside \cite{dT}, \cite{GBPG}, \cite{2007-Guerrero-SICON}, etc.) for linear parabolic systems, \cite{CGR} for a nonlinear parabolic system, \cite{2009-Coron-Guerrero-JDE} for Stokes equations, \cite{2006-Fernandez-Cara-Guerrero-Immanuvilov-Puel-SICON}, 
\cite{2009-Coron-Guerrero-JMPA} and \cite{CL} for Navier-Stokes equations.
Let us also point out that, in many of these articles, the general strategy is to start with a controllability result in the case where there are as many controls as the number of equations and then to try to remove some of these controls by a suitable procedure. In the present article we will follow this general strategy by making use of the so-called fictitious control method introduced  in \cite{1992-JMC-MCSS} for the control of linear ordinary differential equations and in \cite{GBPG} for the control of linear partial differential equations, an article where this illuminating terminology was moreover introduced.

In \cite{LR2003}, the authors introduced a constructive method to control quasilinear systems of $n$ equations by $n$ boundary controls.
This proficient method is based on existence and uniqueness results of semi-global solutions \cite{LJ} (i.e. with large time and small data) that they apply to several mixed initial-boundary value problems, using also the equivalent roles of the time and the space.
As we shall see below, using a method of extension of the domain (as it is often used in the parabolic framework), we can recover this result for the internal controllability, that is we can prove the controllability of $n \times n$ quasilinear systems by $n$ internal controls.
The situation is more complicated when we have less controls than equations.
Indeed the extension method is not anymore applicable in this context.
Thus, we need to develop direct methods to solve the problem of internal controllability.

We start the study with linear systems of equations with the same velocity.
In this case, we can apply the method of characteristics and obtain a complete and simple characterization of the exact controllability.
We show that the linear system can be viewed as a parametrized family of ODEs that are controlled independently.
The difficulty is actually to prove that this is enough to build a smooth ($C^1$) control for the linear hyperbolic system.
Moreover, since we look for controls of the hyperbolic system that are localized in some part of the domain, a nonstandard condition on the supports of the ODEs also appears and needs to be handled.
Another key point of the proof is the explicit formula of the HUM control for ODEs. \phantom{\cite{BFH}}

Using then a standard fixed point argument we can obtain sufficient conditions for the local exact controllability around the trajectories of semilinear systems.
However, when the equations do not have the same speed anymore and the nonlinearity is stronger, that is when we consider quasilinear systems, the standard linear test fails because of a loss of derivatives.
To solve this problem, we need to use a fixed point of Nash-Moser type.
We propose to use the fixed point theorem of M.~Gromov \cite[Section 2.3.2, Main Theorem]{Gro}, which is based on the notion of algebraic solvability for partial differential operators (see Definition \ref{def alg} below for more details).
The method consists in, first controlling the $n \times n$ system by $n$ controls, and then to eliminate a certain number of controls through the algebraic solvability.
The use of the Gromov algebraic solvability in the framework of the control theory was introduced in \cite[Pages 13-15]{Cor} for the control of linear ordinary differential equations (however it does not lead to new results in this case), in \cite{CL} for a Navier-Stokes control system, and in \cite{DL15,DL16} for some first order coupled parabolic systems.
In these works, the parabolicity allows to have smooth controls, and thus to avoid the problem of loss of derivatives.
The difference between the present work and \cite{CL}, where the algebraic solvability was the difficult task (the fixed point was standard), is that, following the algebraic solvability step, we show how to apply the fixed point theorem of Gromov to obtain the controllability of the quasilinear system.
Last, but not least, this method is probably not optimal with the regularity obtained, which leaves some challenging problems.

\section{Systems of equations with the same velocity}

\subsection{Linear systems}\label{sec same speed lin}

Let us consider the following linear hyperbolic system with periodic boundary conditions:
\begin{equation}\label{syst same speed}
\left\{
\begin{array}{l}
y_t + y_x + A(t,x)y= B(t,x) \cont, \quad (t,x) \in [0,T]\times[0,L], \\
y(t,L)=y(t,0), \quad t \in [0,T], \\
y(0,x)= y^0(x), \quad x \in [0,L].
\end{array}
\right.
\end{equation}
In \eqref{syst same speed}, $T>0$ is the control time, $L>0$ is the length of the domain.
$A$ and $B$ are time and space dependent matrices of size $n \times n$ and $n \times m$, respectively, where $n \in \N^*$ denotes the number of equations of the system and $m \in \N^*$ the number of controls (with possibly $m<n$).
$y^0$ is the initial data and $y(t,\cdot):[0,L] \longrightarrow \R^n$ is the state at time $t \in [0,T]$.
Finally, $\cont(t,\cdot):[0,L] \longrightarrow \R^m$ is the distributed control at time $t \in [0,T]$, that we look subject to the constraint
\begin{equation}\label{constraint}
\supp \cont \subset [0,T]\times [a,b],
\end{equation}
where here, and in what follows, the interval $[a,b]$, with $0 \leq a < b \leq L$, is fixed.

Throughout this article, for $k \in \N$ and $p \in \N^*$, we denote by $C^k_L([0,T]\times[0,L])^p$ (\textit{resp.} $C^k_L([0,L])^p$) the Banach space of functions $y \in C^k([0,T]\times[0,L])^p$ (\textit{resp.} $y \in C^k([0,L])^p$) that are \xlp, that is
\begin{subequations}
\begin{gather}
\partial^i_x y(t,0)=\partial^i_x y(t,L), \quad \forall t \in [0,T],\quad \forall i \in \inter{0,k}. \\
\left(
\textit{resp. }
y^{(i)}(0)=y^{(i)}(L), \quad \forall i \in \inter{0,k}.
\right)
\end{gather}
\end{subequations}

All along Section \ref{sec same speed lin} we assume that $A \in C^1_L([0,T]\times[0,L])^{n \times n}$, $B \in C^1_L([0,T]\times[0,L])^{n \times m}$.
These assumptions are made for regularity purposes, see below.

We recall that, for every $T>0$, there exists $C>0$ such that, for every $\cont \in C^1_L([0,T]\times[0,L])^m$ and every $y^0 \in C^1_L([0,L])^n$,
there exists a unique classical global solution $y \in C^1_L([0,T]\times[0,L])^n$ to \eqref{syst same speed}, and this solution satisfies the estimate
$$\norm{y}_{C^1} \leq C \left(\norm{y^0}_{C^1}+\norm{\cont}_{C^1}\right).$$
This well-posedness result follows from the classical theory of linear hyperbolic systems using the method of characteristics \cite{LY}.
Note that the kind of boundary conditions we consider is nonlocal but, as already noticed in \cite{CBAN} (see also \cite{LRW}), it can always be reduced to more standard (i.e. local) boundary conditions by introducing the enlarged system satisfied by $(y,\tilde{y})$ where $\tilde{y}(t,x)=y(t,L-x)$.

\begin{definition}
We say that System \eqref{syst same speed} is exactly controllable at time $T>0$ if, for every $y^0 \in C^1_L([0,L])^n$ and for every $y^1 \in C^1_L([0,L])^n$,
there exists a control $\cont \in C^1_L([0,T]\times[0,L])^m$ that satisfies the constraint \eqref{constraint} and is such that the corresponding solution $y \in C^1_L([0,T]\times[0,L])^n$ to \eqref{syst same speed} satisfies
$$y(T,x)=y^1(x), \quad \forall x \in [0,L].$$
\end{definition}

\subsubsection{The extended characteristics}

Let us now introduce an important tool when dealing with hyperbolic systems, namely the characteristics of the system.
In our case (speed 1 on each equation), the characteristic $\car$ of System \eqref{syst same speed} passing through the point $(t_0,x_0) \in [0,T]\times[0,L]$ is the straight line
$$\car(t,t_0,x_0) \eqdef t-t_0+x_0, \quad t \in [0,T].$$
However, in this paper, the crucial tools we need are the \textit{extended characteristics} $\excar: [0,T]\times[0,L) \longrightarrow [0,L]$, defined by (see Fig. \ref{fig caract} below):
$$
\excar(t,x) \eqdef 
\left\{\begin{array}{l}
\car\left(t,0,x\right) \, \mbox{ if } t \in \left[0,\tau_0(x,L)\right], \\
\car\left(t,\tau_{k-1}(x,L),0\right) \, \mbox{ if } t \in \left(\tau_k(x,0),\tau_k(x,L)\right], \, k \in \inter{\kmin(x,0),\kmax(x,0)},
\end{array}\right.
$$
where, for every $k \in \N$ and $c \in [0,L]$, we introduce the functions
$$\tau_k(x,c) \eqdef
\left\{\begin{array}{cl}
0 & \ds \mbox{ if }  c-x+kL\in (-\infty,0), \\
c-x+kL &\ds \mbox{ if } c-x+kL \in [0,T], \\
T & \ds \mbox{ if } c-x+kL \in (T,+\infty),
\end{array}\right.
$$
and $\kmin(x,c)$ (\textit{resp.} $\kmax(x,c)$) denotes the smallest (\textit{resp.} greatest) integer $k \in \N$ such that $c-x+kL>0$ (\textit{resp.} $c-x+kL<T$).
More precisely, denoting by $\floor{\cdot}$ the floor function and $\ceil{\cdot}$ the ceiling function,
$$
\kmin(x,c) \eqdef \floor{\frac{-c+x}{L}}+1
=\left\{
\begin{array}{cl}
0 & \mbox{ if } x \in [0,c), \\
1 & \mbox{ if } x \in [c,L),
\end{array}\right.
$$
$$
\kmax(x,c) \eqdef \ceil{\frac{T-c+x}{L}}-1
=\left\{
\begin{array}{cl}
\ceil{\frac{T-c}{L}}-1 &\ds \mbox{ if } x \in \left[0,p(c)\right], \\
\ceil{\frac{T-c}{L}} &\ds \mbox{ if } x \in \left(p(c),L\right),
\end{array}\right.
$$
where
$$p(c) \eqdef \left(\ceil{\frac{T-c}{L}}-\frac{T-c}{L}\right)L.$$
Note that $\tau_k(x,0)=\tau_{k-1}(x,L)$ for every $k \geq 1$ and $\tau_{\kmax(x,0)}(x,L)=T$, so that $\excar(t,x)$ is indeed defined for every $t \in [0,T]$.

\begin{figure}[h!]
\centering
\begin{tikzpicture}[scale=0.3]

\draw [thick] [<->] (0,15) node [left] {$x$} -- (0,0)  -- (28,0) node [below right] {$t$};
\draw (-0.4,-0.2) node [below] {$0$};
\draw (0,12) node [left] {$L$};
\draw (25,0.2)--(25,-0.2) node [below] {$T$};
\draw [thick,dashed,fill=gray!40] (0,3) rectangle (25,8);
\draw [thick] (0,0) rectangle (25,12);

\draw (0.2,3)--(-0.2,3) node [left] {$a$};
\draw [very thick] (0,3)--(25,3);
\draw (0.2,8)--(-0.2,8) node [left] {$b$};
\draw [very thick] (0,8)--(25,8);
\draw (0.2,9)--(-0.2,9) node [left] {$x$};

\draw (0,9)--(3,12);
\draw [dashed] (3,12)--(3,0);
\draw (3,0)--(15,12);
\draw [dashed] (15,12)--(15,0);
\draw (15,0)--(25,10);
\draw (9,10) node[scale=1.5] {$\excar(\cdot,x)$};

\draw (0,9) node [rotate=45,scale=1.7] {$[$};
\draw (3,12) node [rotate=45,scale=1.7] {$]$};
\draw (3,0) node [rotate=45,scale=1.7] {$($};
\draw (15,12) node [rotate=45,scale=1.7] {$]$};
\draw (15,0) node [rotate=45,scale=1.7] {$($};
\draw (25,10) node [rotate=45,scale=1.7] {$]$};

\draw (3,0.2)--(3,-0.2) node[below] {\small $\tau_0(x,L)$};
\draw (15,0.2)--(15,-0.2) node[below] {\small $\tau_1(x,L)$};

\draw [dashed] (6,3)--(6,0);
\draw (6,0.2)--(6,-0.2);
\draw (6,-1) node[below] {\small $\tau_1(x,a)$};
\draw [dashed] (11,8) -- (11,0);
\draw (11,0.2)--(11,-0.2);
\draw (11,-1) node[below] {\small $\tau_1(x,b)$};

\end{tikzpicture}
\caption{The extended characteristics $\excar(\cdot,x)$.}
\label{fig caract}
\end{figure}

For $0 \leq a < b \leq L$, we list below some properties of these functions, under the essential assumption that every extended characteristic $\excar$ crosses the domain $[0,T]\times[a,b]$ at some time, that is
$$T>L-(b-a).$$
\begin{enumerate}
\item
$\kmin(x,b) \leq \kmax(x,a)$.
\item
$\tau_k(x,a)<\tau_k(x,b)$ for every $k \in \inter{\kmin(x,b),\kmax(x,a)}$ if $x \neq b$ and $x \neq p(a)$.
\item
$\tau_k(x,b)\leq\tau_{k+1}(x,a)$ for every $k \in \inter{\kmin(x,b),\kmax(x,a)-1}$.
\item
$\excar(t,x) \in (a,b)$ for every $t \in \left(\tau_k(x,a),\tau_k(x,b)\right)$ for every $k$ satisfying 
 $$ k \in \inter{\kmin(x,b),\kmax(x,a)}.$$
\end{enumerate}

We then introduce the following open sets (see Fig. \ref{fig trapezium} below)
$$
\Tau_0 \eqdef
\ens{(t,x) \in (0,T)\times(0,L) \quad \middle| \quad t \in \left(0,\tau_0(x,L)\right)},
$$
$$
\Tau_{\ceil{\frac{T}{L}}} \eqdef
\ens{(t,x) \in (0,T)\times \left(p(0),L\right), \quad \middle| \quad t \in \left(\tau_{\ceil{\frac{T}{L}}}(x,0),T\right)},
$$
and, for $k \in \inter{1,\ceil{\frac{T}{L}}-1}$,
$$
\Tau_k \eqdef
\ens{(t,x) \in (0,T)\times(0,L) \quad \middle| \quad t \in \left(\tau_k(x,0),\tau_k(x,L)\right)}.
$$
We set
$$\Tau \eqdef \bigcup_{k=0}^{\ceil{\frac{T}{L}}} \Tau_k.$$

\begin{figure}[h!]
\centering
\begin{tikzpicture}[scale=0.3]

\draw [thick] [<->] (0,15) node [left] {$x$} -- (0,0)  -- (28,0) node [below right] {$t$};
\draw (-0.4,-0.2) node [below] {$0$};
\draw (0,12) node [left] {$L$};
\draw (25,0.2)--(25,-0.2) node [below] {$T$};
\draw [thick] (0,0) rectangle (25,12);

\draw [dashed] (0,12)--(12,0);
\draw [dashed] (12,12)--(24,0);
\draw [dashed] (24,12)--(25,11);

\draw (3,3) node[scale=1.5] {$\Tau_0$};
\draw (13,5) node[scale=1.5] {$\Tau_1$};
\draw (23,7) node[scale=1.5] {$\Tau_2$};

\draw (6.3,8) node[scale=1] {$\adh{\Tau_0} \cap \adh{\Tau_1}$};
\draw (17,9.5) node[scale=1] {$\adh{\Tau_1} \cap \adh{\Tau_2}$};

\end{tikzpicture}
\caption{Domains $\Tau_k$ and the parts $\adh{\Tau_k} \cap \adh{\Tau}_{k+1}$ of their boundary.}
\label{fig trapezium}
\end{figure}

\begin{remark}\label{rem reg tools}
Let us give some comments about the properties of the extended characteristics.
First, we have
$$\excar \in C^1\left(\Tau\right),$$
with $\excar_t(t,x)=\excar_x(t,x)=1$ for every $(t,x) \in \Tau$.
Moreover, at the boundary $\partial\Tau$, we have
$$\forall (t_0,x_0) \in \adh{\Tau_k} \cap \adh{\Tau}_{k+1}, \quad
\lim_{\substack{(t,x) \to (t_0,x_0) \\ (t,x) \in \Tau_k}} \excar(t,x)=L,
\quad
\lim_{\substack{(t,x) \to (t_0,x_0) \\ (t,x) \in \Tau_{k+1}}} \excar(t,x)=0,$$
$\lim_{\substack{(t,x) \to (t_0,0) }} \excar(t,x)=\excar(t_0,0)$ for every $t_0 \in [0,T]$, and
$$
\forall t_0 \in [0,T], \quad \lim_{\substack{(t,x) \to (t_0,L) }} \excar(t,x)=
\left\{\begin{array}{cl}
\excar(t_0,0) & \mbox{ if } t_0 \in (0,T], \\
L & \mbox{ if } t_0=0.
\end{array}\right.
$$
In particular $\excar$ has a continuous extension (still denoted by $\excar$) to all the points of the boundary $(t,L)$, $t \in [0,T]$.

Second, the map $(t,x) \mapsto (t,\excar(t,x))$ is a $C^1$-diffeomorphism from $\Tau$ to $\Tau'=(0,T)\times(0,L) \backslash \ens{(t,\excar(t,0)) \, \middle| \, t \in [0,T]}$.
We denote by $(t,\excar^{-1}(t,x))$ its inverse.
\end{remark}

Finally, for every $x \in [0,L]$, we denote by $\Ax$ and $\Bx$ the values of $A$ and $B$ along the extended characteristic $\excar(\cdot,x)$:
$$\Ax(t) \eqdef A(t,\excar(t,x)), \quad \Bx(t) \eqdef B(t,\excar(t,x)), \quad \forall t \in [0,T].$$

Clearly, $(t,x) \mapsto \Ax(t) \in C^1(\Tau)^{n \times n}$.
Since $A$ is \xlp, we have $A \in C^0\left(\ext{\Tau}\right)^{n \times n}=C^0([0,T]\times[0,L])^{n \times n}$.
On the same manner, since $\partial_t A$ and $\partial_x A$ are \xlp, we have $A \in C^1([0,T]\times[0,L])^{n \times n}$.
Note also that $A_L=A_0$.
The same statements hold for the map $(t,x) \mapsto \Bx(t)$ as well.

\subsubsection{Characterization of the controllability of \eqref{syst same speed}}

The main result of section \ref{sec same speed lin} is the following:
\begin{theorem}\label{thm same speed}
Let $T,L>0$ and $0 \leq a < b \leq L$.
System \eqref{syst same speed} is exactly controllable at time $T$ if, and only if, the following 2 conditions hold:
\begin{enumerate}[label={(${\cal H}_{\arabic*}$)}]
\item\label{hyp temps}
$T>L-(b-a)$.
\item\label{hyp caract}
For every $x \in [0,L)$, the following ODE is controllable:
\begin{equation}\label{equ caract}
\left\{\begin{array}{l}
\ds \ddt z(t)=- \Ax(t) z(t)+\Bx(t)\psi(t), \quad \forall t \in [0,T], \\
z(0)=z^0 \in \R^n,
\end{array}\right.
\end{equation}
with controls $\psi \in C^1([0,T])^m$ such that
\begin{equation}\label{constr ODE}
\psi \equiv 0 \mbox{ in } [0,T]\backslash \left(\bigcup_{k=\kmin(x,b)}^{\kmax(x,a)} \left[\tau_k(x,a),\tau_k(x,b)\right]\right).
\end{equation}
\end{enumerate}
\end{theorem}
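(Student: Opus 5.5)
The plan is to reduce the PDE to the family of ODEs along the extended characteristics. For a solution $y$ of \eqref{syst same speed}, set $z^x(t)\eqdef y(t,\excar(t,x))$ and $\psi^x(t)\eqdef\cont(t,\excar(t,x))$. Since every characteristic has speed $1$ and the boundary conditions are periodic, $z^x$ is continuous across the jumps of $\excar(\cdot,x)$ from $L$ to $0$. Moreover $\frac{d}{dt}z^x=-\Ax z^x+\Bx\psi^x$ holds on each piece, and hence on $[0,T]$. Its initial value is $z^x(0)=y^0(x)$ and its final value is $z^x(T)=y(T,\excar(T,x))$. The map $x\mapsto \excar(T,x)$ is a bijection of $[0,L)$ (a translation by $T$ mod $L$). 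Consequently, exact controllability of \eqref{syst same speed} amounts to this: for every family of endpoints $(z^0(x),z^1(x))$, one must find controls $\psi^x$ steering each ODE, which glue back into a $C^1_L$ field $\cont(t,\xi)=\psi^{\excar^{-1}(t,\xi)}(t)$ supported in $[0,T]\times[a,b]$. The support constraint \eqref{constraint} translates exactly into \eqref{constr ODE}, by property 4 of the extended characteristics.

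\textbf{Necessity.} For \ref{hyp caract}, take $z^0,z^1\in\R^n$ and a fixed $x$. Choose $y^0\in C^1_L$ with $y^0(x)=z^0$ and $y^1$ with $y^1(\excar(T,x))=z^1$. The restriction of the resulting $\cont$ to the extended characteristic through $x$ is a $C^1$ control for \eqref{equ caract} satisfying \eqref{constr ODE}. It is piecewise $C^1$ a priori, but $C^1$ because $\cont$ is $C^1_L$. For \ref{hyp temps}, suppose $T\le L-(b-a)$. Then some extended characteristic avoids the open set $(a,b)$ altogether, so that $\psi^x\equiv0$ along it. The value $y(T,\excar(T,x))$ is then determined by $y^0(x)$ alone, and it cannot be prescribed arbitrarily.

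\textbf{Sufficiency.} This is the main work. Since the system is linear, it suffices to steer $y^0$ to $y^1$ by steering $z^x(0)=y^0(x)$ to $z^x(T)=y^1(\excar(T,x))$ for each $x$. I will use the explicit HUM-type control. Let $R^x(t,s)$ be the resolvent of $-\Ax$, and let $\chi^x(t)\ge0$ be a smooth cutoff, supported in $\bigcup_k[\tau_k(x,a),\tau_k(x,b)]$ and positive on the interior. Define the weighted Gramian
$$G^x=\int_0^T\chi^x(t)R^x(T,t)\Bx(t)\Bx(t)^{\top}R^x(T,t)^{\top}\,dt .$$
Condition \ref{hyp caract} gives invertibility of $G^x$ for each $x$. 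Then set
$$\psi^x(t)=\chi^x(t)\Bx(t)^{\top}R^x(T,t)^{\top}(G^x)^{-1}\bigl(z^1(x)-R^x(T,0)z^0(x)\bigr).$$

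The key obstacle is regularity and uniformity in $x$. One needs $G^x$ to be invertible uniformly, i.e.\ $\inf_x\det G^x>0$, including at the exceptional points $x=b$, $x=p(a)$ and near $x\to L$, where intervals degenerate or $\kmin,\kmax$ jump. One also needs $\cont$ to be $C^1$ across the lines $\adh{\Tau_k}\cap\adh{\Tau}_{k+1}$, and $L$-periodic.

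To handle this, I would build $\chi^x(t)=\chi(t,\excar(t,x))$ from a fixed smooth function $\chi(t,\xi)$ that is compactly supported in $\xi\in(a,b)$ and chosen so that its support in $t$ still meets every characteristic's passage. This is possible because $T>L-(b-a)$ allows shrinking $[a,b]$ slightly to $[a',b']$ with $T>L-(b'-a')$. Such a $\chi$ vanishes near $\xi=0,L$. Hence $\cont$ is automatically $C^1$ across the jump lines, since the jumps occur where $\chi=0$, and it is periodic.

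For the uniform invertibility, I would argue by continuity and compactness. The map $x\mapsto G^x$ is continuous on $[0,L]$ (using $A_L=A_0$ and the continuous extension of $\excar$ from Remark~\ref{rem reg tools}), so it suffices to check invertibility at each point. At each point, invertibility holds by \ref{hyp caract} applied with the shrunk interval. That controllability survives the shrinking is a subtle step: I expect to need a short argument that controllability of an analytic-in-nothing ODE on a union of intervals persists on slightly smaller intervals. Indeed, the Gramian over an open set is invertible iff it is invertible over some compact subset, because the kernel condition $B^\top R^\top v\equiv0$ is closed. Finally, the $C^1$ dependence on $(t,x)$ in $\Tau$ follows from differentiating the resolvent, using $A,B\in C^1$.
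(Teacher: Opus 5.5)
Your route is the paper's: HUM control along the extended characteristics, glued back through $\excar^{-1}$. Your necessity arguments are fine. The gap is the uniform shrinking in the sufficiency part.

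You fix $\chi$, compactly supported in $\xi\in(a,b)$, using only \ref{hyp temps}. You then assert that $G^x$ is invertible for each $x$ ``by \ref{hyp caract} applied with the shrunk interval''. But \ref{hyp caract} is assumed only for $[a,b]$. Your justification (a Gramian invertible over an open set is invertible over some compact subset) is pointwise in $x$. It gives, for each $x$, some $\epsilon(x)>0$ such that the passage intervals shrunk by $\epsilon(x)$ still give an invertible Gramian, but no lower bound on $\epsilon(x)$.

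Your continuity/compactness step is then applied to $x\mapsto G^x$ for the already chosen $\chi$. So it presupposes the very pointwise invertibility in question rather than proving it. This is not a formality. Let $[a'',b'']\subset(a,b)$ contain the $\xi$-support of $\chi$. For $x\in(\max(a,b''),b)$, the whole passage $k=0$ (times $t\in[0,b-x)$, during which $\excar(t,x)\in[x,b)$) is lost. One must therefore know that the remaining passages alone carry an invertible Gramian, uniformly in $x$.

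This is exactly what Lemma~\ref{key lemma} supplies, by applying compactness to the \emph{unshrunk} Gramian:
\begin{itemize}
\item The map $x\mapsto Q(x,0)$ extends continuously to $[0,L]$ and is invertible everywhere by \ref{hyp caract} and Proposition~\ref{prop ext Gramian}. Hence there is $r>0$ with $\ball(Q(x,0),r)\subset\gln$ for all $x$.
\item The times where $\excar(t,x)\in[a,a+\ell)\cup(b-\ell,b]$ form a set of measure at most $2\ell(\ceil{T/L}+1)$. Removing them changes the Gramian by $O(\ell)$, uniformly in $x$.
\end{itemize}
Hence, for $\ell$ small and any $\chi\ge 0$ with $\chi\equiv 1$ on $[0,T]\times[a+\ell,b-\ell]$, each $G^x$ dominates, as a quadratic form, the invertible Gramian over $\{t:\excar(t,x)\in[a+\ell,b-\ell]\}$. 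So $G^x$ is invertible for every $x$.

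For the theorem alone you could even avoid shrinking. Take $\chi\in C^1$ positive on $[0,T]\times(a,b)$ and vanishing to first order at $\xi=a,b$, e.g.\ $(\xi-a)^2(b-\xi)^2$. Then $\ker G^x=\ker Q(x,0)=\{0\}$ directly from \ref{hyp caract}, and your continuity argument goes through. The price is losing the localization \eqref{reduc constraint}, which the paper uses later.
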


\begin{remark}
When $[a,b]=[0,L]$, hypothesis \ref{hyp temps} and \eqref{constr ODE} are automatically satisfied.
\end{remark}

\begin{remark}
As we shall see below (Proposition \ref{prop ext Gramian}) the controllability of \eqref{equ caract} with \eqref{constr ODE} only depends on the values of $A$ and $B$ inside the control domain $[0,T]\times[a,b]$.
\end{remark}

\begin{remark}\label{rem estim control}
In the proof of Theorem \ref{thm same speed} we will explicitly construct a control $\cont$ that steers the solution $y$ to \eqref{syst same speed} from $y^0$ to $y^1$, see \eqref{def cont final} below.
We can see that this control $\cont$ satisfies the following additional properties:
\begin{enumerate}
\item
Continuity:
there exists $C>0$ (depending only on $T,L,a,b,A,B$) such that
$$\norm{\cont}_{C^1} \leq C \left(\norm{y^0}_{C^1}+\norm{y^1}_{C^1}\right).$$
\item
Locality:
there exists $\delta>0$ small enough (depending only on $T,L,a,b,A,B$), such that
\begin{equation}\label{reduc constraint}
\supp \cont \subset [\delta,T-\delta]\times[a+\delta,b-\delta].
\end{equation}
\item
Higher regularity:
if $y^0,y^1 \in C^k_L([0,L])^n$ and $A \in C^k_L([0,T]\times[0,L])^{n \times n}$, $B \in C^k_L([0,T]\times[0,L])^{m \times n}$ ($k \geq 1$), then
$$\cont \in C^k([0,T]\times[0,L])^m.$$
\end{enumerate}
\end{remark}

\subsubsection{Controllability of linear O.D.E with constraints}\label{sec cont ODE constr}

Let us recall that we know some powerful tools to characterize the controllability of linear time-varying ODEs if no constraint are imposed on the controls.
We state below the extensions of these theorems to the case where the controls are supported in some part of the domain.

Let us consider the $n \times n$ ODE
\begin{equation}\label{equ caract ex}
\left\{\begin{array}{l}
\ds \ddt z(t)=-A(t) z(t)+B(t)\psi(t), \quad \forall t \in [0,T], \\
z(0)=z^0 \in \R^n,
\end{array}\right.
\end{equation}
with $A \in C^1([0,T])^{n \times n}$, $B \in C^1([0,T])^{n \times m}$.
We want to characterize the controllability of \eqref{equ caract ex} with the following additional constraint on the controls:
\begin{equation}\label{constr ODE ex}
\psi \equiv 0 \mbox{ in } [0,T]\backslash \left(\bigcup_{i=1}^{M} \left[a_i,b_i\right]\right),
\end{equation}
where $0 \leq a_i<b_i \leq T$ are such that $b_i \leq a_{i+1}$ for every $i \in \inter{1,M-1}$.

Let us denote by $R \in C^1([0,T]\times[0,T])^{n \times n}$ the resolvent associated with $-A \in C^1([0,T])^{n \times n}$, that is, for every $s \in [0,T]$, $R(\cdot,s)$ is the classical solution to the ODE
$$
\left\{\begin{array}{l}
\ds \partial_t R(t,s)=-A (t)R(t,s), \quad \forall t \in [0,T], \\
R(s,s)=\Id.
\end{array}\right.
$$

\begin{proposition}\label{prop ext Gramian}
The ODE \eqref{equ caract ex} is controllable with \eqref{constr ODE ex} if, and only if, its controllability Gramian, that is the $n \times n$ matrix
\begin{equation}\label{def gram}
Q \eqdef \sum_{i=1}^{M}
\int_{a_i}^{b_i} R\left(T,s\right)B(s) B(s)^*R\left(T,s\right)^* \, ds,
\end{equation}
is invertible.
\end{proposition}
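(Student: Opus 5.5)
The plan is to follow the classical Kalman--Gramian argument, with the integral over $[0,T]$ replaced by the integral over the admissible set $I \eqdef \bigcup_{i=1}^M [a_i,b_i]$. By Duhamel's formula, the solution of \eqref{equ caract ex} satisfies
$$z(T)=R(T,0)z^0+\int_0^T R(T,s)B(s)\psi(s)\,ds .$$
Since $R(T,0)$ is invertible, controllability (from every $z^0$ to every $z^1$) is equivalent to surjectivity of the linear map $\Lambda:\psi\mapsto \int_0^T R(T,s)B(s)\psi(s)\,ds$ on the space $\mathcal{U}$ of $C^1([0,T])^m$ controls satisfying \eqref{constr ODE ex}. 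Equivalently, $\Lambda^*$ must be injective in the sense that no nonzero $\xi\in\R^n$ annihilates $\Lambda(\mathcal U)$.

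\textbf{Sufficiency.} Assume $Q$ is invertible. The natural HUM candidate $\psi(s)=B(s)^*R(T,s)^*\eta\,\mathbf 1_I(s)$ is not $C^1$, since it jumps at the $a_i,b_i$; handling this is the main obstacle. I would avoid it by a density/perturbation argument. Choose smooth cutoffs $\chi_\epsilon\in C^\infty_c$ with $0\le\chi_\epsilon\le 1$, $\supp\chi_\epsilon\subset I$, and $\chi_\epsilon\to\mathbf 1_I$ pointwise a.e. (for instance $\chi_\epsilon=1$ on $\bigcup_i[a_i+\epsilon,b_i-\epsilon]$). Set
$$Q_\epsilon\eqdef\int_0^T \chi_\epsilon(s)R(T,s)B(s)B(s)^*R(T,s)^*\,ds .$$
By dominated convergence $Q_\epsilon\to Q$, so $Q_\epsilon$ is invertible for small $\epsilon$. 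Then the control $\psi(s)=\chi_\epsilon(s)B(s)^*R(T,s)^*Q_\epsilon^{-1}\bigl(z^1-R(T,0)z^0\bigr)$ is $C^1$ (because $B,R\in C^1$), satisfies \eqref{constr ODE ex}, and steers $z^0$ to $z^1$. This construction also gives the locality property used later in Remark~\ref{rem estim control}.

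\textbf{Necessity.} Assume the system is controllable with \eqref{constr ODE ex} but $Q$ is singular. Take $\xi\neq0$ with $Q\xi=0$. Then
$$0=\xi^*Q\xi=\sum_i\int_{a_i}^{b_i}\abs{B(s)^*R(T,s)^*\xi}^2\,ds,$$
so by continuity $\xi^*R(T,s)B(s)=0$ on each $[a_i,b_i]$. Hence for every admissible $\psi$, which vanishes outside $I$, we get $\xi^*\Lambda\psi=0$, so $\xi^*(z(T)-R(T,0)z^0)=0$. Choosing $z^0=0$ and $z^1=\xi$ then gives a contradiction.
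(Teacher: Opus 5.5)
Your argument is correct and is essentially the paper's: the paper likewise replaces $B$ by $\eta B$, with a cut-off $\eta$ vanishing outside $\bigcup_i[a_i,b_i]$ and equal to $1$ on $\bigcup_i[a_i+\epsilon,b_i-\epsilon]$, chooses $\epsilon$ small so that the Gramian stays invertible by continuity, and then appeals to the classical unconstrained Gramian theorem of \cite[Theorem 5]{KHN}. You write that classical step out explicitly: the control $\chi_\epsilon B^*R(T,\cdot)^*Q_\epsilon^{-1}(z^1-R(T,0)z^0)$ for sufficiency, and the standard kernel argument for necessity.
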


The proof of Proposition \ref{prop ext Gramian} can be adapted from the one of \cite[Theorem 5]{KHN}.
To do so, we consider the control problem \eqref{equ caract ex} with $\eta B$ instead of $B$, where $\eta$ is a cut-off function that vanishes outside $\bigcup_{i=1}^{M} \left[a_i,b_i\right]$ and is equal to $1$ in $\bigcup_{i=1}^{M} \left[a_i+\epsilon,b_i-\epsilon\right]$ with $\epsilon>0$ small enough so that, by continuity, the Gramian \eqref{def gram} with $a_i+\epsilon$ (\textit{resp.} $b_i-\epsilon$) instead of $a_i$ (\textit{resp.} $b_i$) remains invertible.

Thanks to the previous characterization, we obtain the following proposition (see \cite[Theorem 10]{KHN} and \cite{SM} for a proof):
\begin{proposition}\label{prop ext Kalman}
Assume that $A$ and $B$ are constant.
Then, the controllability of \eqref{equ caract ex}-\eqref{constr ODE ex} is equivalent to the algebraic condition
$$\rank \kalm{A}{B}=n,$$
where the $n \times nm$ matrix $\kalm{A}{B}$ is defined by
\begin{equation}\label{kalm matx}
\kalm{A}{B} \eqdef [B|A B|A^2 B | \cdots | A^{n-1}B].
\end{equation}
\end{proposition}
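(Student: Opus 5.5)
Since $A$ is constant, the resolvent is $R(T,s)=e^{-(T-s)A}$. By Proposition \ref{prop ext Gramian}, controllability of \eqref{equ caract ex}-\eqref{constr ODE ex} is equivalent to invertibility of
$$Q=\sum_{i=1}^{M}\int_{a_i}^{b_i} e^{-(T-s)A}BB^*e^{-(T-s)A^*}\,ds .$$
So the task is to show that $Q$ is invertible if and only if $\rank\kalm{A}{B}=n$. Because $Q$ is symmetric positive semidefinite, $Q$ fails to be invertible exactly when some $v\in\R^n\setminus\{0\}$ satisfies $v^*Qv=0$. Note that
$$v^*Qv=\sum_{i=1}^{M}\int_{a_i}^{b_i}\abs{B^*e^{-(T-s)A^*}v}^2ds .$$
Hence $Q$ is singular if and only if there is $v\neq0$ such that $B^*e^{-(T-s)A^*}v=0$ for all $s\in\bigcup_i[a_i,b_i]$. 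We use continuity of the integrand here.

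Suppose first that $\rank\kalm{A}{B}<n$. Then there is $v\ne0$ with $v^*A^kB=0$ for $k\in\inter{0,n-1}$. By Cayley--Hamilton this holds for every $k\in\N$. Expanding the exponential as a power series gives $v^*e^{-(T-s)A}B=0$ for all $s$, so $v^*Qv=0$ and $Q$ is singular.

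Conversely, suppose $Q$ is singular and take $v$ as above. The function $f(s)=v^*e^{-(T-s)A}B$ is real-analytic on $\R$, since it is an entire function of $s$. It vanishes on $[a_1,b_1]$, an interval of positive length because $a_1<b_1$. By the identity theorem, $f\equiv0$ on $\R$. Differentiating $k$ times at $s=T$ gives $v^*A^kB=0$ for every $k\in\inter{0,n-1}$. Hence $v^*\kalm{A}{B}=0$ and the rank is less than $n$.

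The key step is the use of analyticity: it transfers vanishing on a single nondegenerate subinterval to vanishing on all of $\R$. This is exactly why the support constraint \eqref{constr ODE ex} plays no role in the constant-coefficient case. The argument only needs one interval with $a_i<b_i$, which is guaranteed by the standing assumptions. Everything else is routine: Proposition \ref{prop ext Gramian} together with the standard Cayley--Hamilton argument.
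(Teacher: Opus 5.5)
Your proof is correct and takes essentially the same route the paper indicates. The paper gives no proof of its own: it derives the statement ``thanks to the previous characterization'' (Proposition~\ref{prop ext Gramian}) and defers to \cite[Theorem 10]{KHN} and \cite{SM}, which is exactly your reduction to invertibility of the Gramian followed by the standard Kalman argument (Cayley--Hamilton for one direction, analyticity of $s\mapsto v^*e^{-(T-s)A}B$ on the nondegenerate interval $[a_1,b_1]$ for the other).
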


\begin{proposition}\label{prop ext SM}
Assume that
$$A \in C^{n-2}([0,T])^{n \times n} \text{ and }B \in C^{n-1}([0,T])^{n \times m}$$
and let us introduce the following notation:
$$
\forall t \in [0,T], \quad
\left\{\begin{array}{l}
\ds B_0(t)=B(t), \\
\ds B_j(t)=\ddt B_{j-1}(t)+A(t) B_{j-1}(t), \quad \forall j \in \inter{1,n-1},
\end{array}\right.
$$
and, for every $t \in [0,T]$, the $n \times nm$ matrix
$$\kalm{A}{B}(t)\eqdef[B_{0}(t)|B_{1}(t)|\cdots| B_{n-1}(t)],$$
(which provides an extension of \eqref{kalm matx}).
Then, the ODE \eqref{equ caract ex} is controllable with \eqref{constr ODE ex} if the following property holds:
$$\exists i \in \inter{1,M}, \quad \exists t_i \in [a_i,b_i], \quad \rank \kalm{A}{B}(t_i)=n.$$
\end{proposition}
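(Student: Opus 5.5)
By Proposition \ref{prop ext Gramian}, it is enough to show that the Gramian $Q$ in \eqref{def gram} is invertible. I will argue by contradiction. Suppose $Q$ is singular. Then there is $\xi \in \R^n\setminus\{0\}$ with $\xi^* Q \xi=0$. Writing $\xi^*Q\xi$ as a sum of integrals of the nonnegative functions $|B(s)^*R(T,s)^*\xi|^2$, each integral vanishes. By continuity this gives
$$\varphi(s)^* B(s)=0, \quad \forall s\in \bigcup_{i=1}^M [a_i,b_i], \qquad \text{where } \varphi(s)\eqdef R(T,s)^*\xi.$$
In particular, $\varphi^* B\equiv 0$ on the interval $[a_i,b_i]$ that contains $t_i$.

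Next I differentiate this identity. Since $R(T,s)R(s,T)=\Id$ and $\partial_t R=-AR$, we get $\partial_s R(T,s)=R(T,s)A(s)$. Hence $\varphi$ solves the adjoint equation
$$\frac{d}{ds}\varphi(s)^*=\varphi(s)^*A(s).$$
Then, by induction on $j$, I show that $\varphi^*B_j\equiv 0$ on $[a_i,b_i]$ for $j\in\inter{0,n-1}$. Indeed, if $\varphi^*B_{j-1}\equiv0$ on the interval, differentiating gives
$$0=\frac{d}{ds}\bigl(\varphi^*B_{j-1}\bigr)=\varphi^*A B_{j-1}+\varphi^*\frac{d}{ds}B_{j-1}=\varphi^*B_j.$$
This differentiation is legitimate on the interval because $a_i<b_i$, so the interval has nonempty interior and one-sided derivatives at the endpoints are fine. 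The hypotheses $A\in C^{n-2}$ and $B\in C^{n-1}$ are exactly what make $B_j\in C^{n-1-j}$, and so $B_{n-1}$ is continuous. In particular, each $B_{j-1}$ with $j\le n-1$ is $C^1$, which is needed for the step.

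Finally, evaluating at $t_i$ gives $\varphi(t_i)^*\kalm{A}{B}(t_i)=0$. The rank condition then forces $\varphi(t_i)=0$. But $R(T,t_i)$ is invertible, so $\xi=0$, which is a contradiction. The only delicate point is this regularity bookkeeping, namely checking that $B_j$ is differentiable up to $j=n-2$ under exactly the stated hypotheses. I do not expect a genuine obstacle; the rest is the standard Silverman--Meadows argument, localized to one interval $[a_i,b_i]$.
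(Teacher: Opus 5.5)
Your proof is correct and is essentially the argument the paper relies on. The paper gives no separate proof; it refers to the literature, via the Gramian characterization of Proposition~\ref{prop ext Gramian} together with the classical Silverman--Meadows argument, and that is exactly what you carry out (kernel vector $\xi$ of $Q$, $\varphi(s)=R(T,s)^*\xi$ annihilating $B$ on $[a_i,b_i]$, repeated differentiation using $\frac{d}{ds}\varphi^*=\varphi^*A$ to annihilate each $B_j$, evaluation at $t_i$), with correct regularity bookkeeping $B_j\in C^{n-1-j}$ and correct use of $a_i<b_i$.
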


\subsubsection{Proof of Theorem \ref{thm same speed}, sufficient part}

The proof of the sufficient part of Theorem \ref{thm same speed} relies on the following key lemma (the proof of which is postponed to the appendix; see Section~\ref{secproolkeylemma}).
It states that we can always reduce a little bit the domain of control.
This is a uniform result with respect to $x$ (compare with Proposition \ref{prop ext Gramian}).

All along this section, for $x \in [0,L]$ we denote by $\Rx \in C^1([0,T]\times[0,T])^{n \times n}$ the resolvent associated with $-\Ax \in C^1([0,T])^{n \times n}$.
It is important to notice that the map $(t,s,x) \mapsto \Rx(t,s)$ is of class $C^1([0,T]\times[0,T]\times[0,L])^{n \times n}$ since the map $(t,x) \mapsto \Ax(t)$ is also of class $C^1([0,T]\times[0,L])^{n \times n}$ (see, for instance \cite[Chapter V, Theorem 3.1]{Har}).

\begin{lemma}\label{key lemma}
Assume that \ref{hyp temps} and \ref{hyp caract} hold.
Then, there exists $\delta>0$ small enough and a cut-off function $\eta \in C^1([0,T]\times[0,L])$ with (see Fig. \ref{fig eta})
\begin{equation}\label{eta zero}
\eta \equiv 0 \mbox{ in } [0,T]\times [0,L] \backslash \left((\delta,T-\delta)\times(a+\delta,b-\delta)\right),
\end{equation}
such that, for every $x \in [0,L]$, the Gramian
\begin{equation}\label{def cont gram}
Q_x \eqdef \int_{0}^{T}  \Rx\left(T,s\right)\Bx(s) \Bx(s)^*\Rx\left(T,s\right)^* \eta(s,\excar(s,x)) \, ds,
\end{equation}
is invertible.
\end{lemma}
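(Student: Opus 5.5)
The idea is a compactness-plus-continuity argument, uniform in $x \in [0,L]$.

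\emph{Step 1: pointwise strict positivity.} Fix $x \in [0,L)$. Hypothesis \ref{hyp caract} and Proposition \ref{prop ext Gramian} give that the Gramian
$$\int_{I_x} \Rx(T,s)\Bx(s)\Bx(s)^*\Rx(T,s)^*\,ds, \qquad I_x=\bigcup_{k=\kmin(x,b)}^{\kmax(x,a)}[\tau_k(x,a),\tau_k(x,b)],$$
is positive definite. By property 4 of the extended characteristics, $s \in I_x$ exactly when $\excar(s,x) \in [a,b]$, up to finitely many endpoints. So the integral equals
$$\int_0^T \Rx(T,s)\Bx(s)\Bx(s)^*\Rx(T,s)^*\,\chi_{[a,b]}(\excar(s,x))\,ds.$$
By monotone convergence, replacing $\chi_{[a,b]}$ by $\chi_{(a+\delta,b-\delta)}$ and restricting $s$ to $(\delta,T-\delta)$ keeps the matrix positive definite for $\delta=\delta_x$ small enough.

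\emph{Step 2: choice of $\eta$.} Take a cut-off $\eta_\delta(t,x)=\alpha_\delta(t)\beta_\delta(x)$ with $0\le\eta_\delta\le 1$, $\eta_\delta\in C^1$ (indeed $C^\infty$), vanishing outside $(\delta,T-\delta)\times(a+\delta,b-\delta)$, and equal to $1$ on $[2\delta,T-2\delta]\times[a+2\delta,b-2\delta]$. Since $\eta_\delta$ has support inside $(a,b)$ in the space variable, it is compactly supported away from $x=0,L$. It is therefore $L$-periodic, and $(s,x)\mapsto\eta_\delta(s,\excar(s,x))$ is continuous on $[0,T]\times[0,L]$. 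Indeed, $\excar$ is discontinuous only where it jumps between the values $L$ and $0$ (Remark \ref{rem reg tools}), and $\eta_\delta$ vanishes at both values. By Step 1, for each $x_0\in[0,L]$ there is $\delta_{x_0}$ with $Q_{x_0}(\eta_{\delta_{x_0}})>0$. The point $x_0=L$ is handled through the continuous extension of $\excar$ and the identities $A_L=A_0$, $B_L=B_0$, so it reduces to $x_0=0$.

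\emph{Step 3: compactness.} The map $x\mapsto Q_x(\eta_\delta)$ is continuous on $[0,L]$. This uses the continuity of $(t,s,x)\mapsto \Rx(t,s)$ and of $(s,x)\mapsto\Bx(s)$ on the closed set (as in the remark preceding the lemma), together with the continuity of $\eta_\delta\circ\excar$ from Step 2, under dominated convergence. Hence positive definiteness at $x_0$ persists on a neighborhood $U_{x_0}$. Moreover $Q_x(\eta_\delta)$ is monotone in $\delta$, since $\eta_\delta$ increases as $\delta$ decreases. Extract a finite subcover $U_{x_1},\dots,U_{x_N}$ of $[0,L]$ and set $\delta=\min_j\delta_{x_j}$. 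Then $Q_x(\eta_\delta)\ge Q_x(\eta_{\delta_{x_j}})>0$ for $x\in U_{x_j}$, and the lemma follows with $\eta=\eta_\delta$.

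\emph{Main obstacle.} The delicate point is the continuity in $x$ of the integrand across the lines where $\excar$ jumps, and at $x\in\{0,L\}$, $x=b$, $x=p(a)$, where the number of crossing intervals changes. The plan handles this by building the cut-off $\eta$ on the physical $(t,x)$ domain, with support strictly inside $(a,b)$, rather than along characteristic intervals. This makes the composite $\eta\circ\excar$ globally continuous, so the changes in the interval structure never enter the argument. Only Step 1 needs the interval description, and there $x$ is fixed.
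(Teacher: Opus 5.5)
Your proof is correct, but it takes a different route from the paper's.

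\textbf{The paper's route.} It works in characteristic coordinates.
\begin{itemize}
\item It introduces shrunk Gramians $Q(x,\ell)$: remove $\ell$ from each end of every crossing interval $[\tau_k(x,a),\tau_k(x,b)]$, and drop the degenerate pieces for $x$ near $b$ and $p(a)$.
\item It gets a uniform invertibility radius $r$ around $Q(x,0)$ from compactness of $Q([0,L],0)$, and shows $Q(x,\ell)\to Q(x,0)$ uniformly in $x$.
\item It builds a cut-off $\xi$ on the label domain, equal to $1$ on $\Tau_k(2\delta)$ and $0$ outside $\ext{\Tau_k(\delta)}$.
\item It pushes $\xi$ forward to $\eta(t,x)=\xi(t,\excar^{-1}(t,x))$. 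Then $\eta(s,\excar(s,x))=\xi(s,x)$ and $Q_x-Q(x,2\delta)$ is nonnegative.
\end{itemize}

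\textbf{Your route.} You choose $\eta$ directly in physical coordinates as a product cut-off. The weight $\eta(s,\excar(s,x))$ is then jointly continuous because $\eta$ vanishes near $X=0$ and $X=L$. In fact, since $\excar(s,x)\equiv s+x$ modulo $L$, this weight equals $\alpha_\delta(s)\tilde\beta_\delta(s+x)$, with $\tilde\beta_\delta$ the $L$-periodic extension of $\beta_\delta$, so it is smooth. You then finish with a soft argument: pointwise positivity at fixed $x$, continuity in $x$, and a finite subcover.

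\textbf{What each buys.} Your approach never has to analyse how the interval structure changes at $x=b$, $x=p(a)$ and $x\in\{0,L\}$. It also avoids the pull-back through $\excar^{-1}$ and checking that the resulting $\eta$ is $C^1$ across the seam $\{x=\excar(t,0)\}$. The paper's approach gives an explicit comparison with the original crossing-interval Gramian. Its weight is exactly $1$ on the shrunk crossing intervals of every characteristic.

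\textbf{Two small points to tidy.}
\begin{itemize}
\item The monotonicity ``$\eta_\delta$ increases as $\delta$ decreases'' is not automatic for an arbitrary family of cut-offs. It does hold whenever $2\delta\le\delta'$, since then $\eta_\delta\equiv 1$ on the support of $\eta_{\delta'}$ and $\eta_{\delta'}\le 1$. So take the final $\delta\le\frac12\min_j\delta_{x_j}$.
\item Step~1 gives positivity for the indicator of the $\delta$-rectangle, but $\eta_\delta\equiv 1$ is only guaranteed on the $2\delta$-rectangle. So $\delta_{x_0}$ in Step~2 should be half the value from Step~1.
\end{itemize}
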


\begin{figure}[h!]
\centering
\begin{tikzpicture}[scale=0.5]

\draw [thick] [<->] (0,9) node [left] {$x$} -- (0,0) -- (11,0) node [below right] {$t$};
\draw (0,-0.2) node [below] {$0$};
\draw (0.2,8)--(-0.2,8) node [left] {$L$};
\draw (10,0.2)--(10,-0.2) node [below] {$T$};

\draw (0.2,2)--(-0.2,2) node [left] {$a$};
\draw (0.2,7)--(-0.2,7) node [left] {$b$};
\draw [thick,dashed,fill=gray!40] (1,2.5) rectangle (10,6.5);
\draw [thick,dashed,fill=white] (1.5,3) rectangle (9.5,6);

\draw [thick] [<->] (0.1,4.2)--(0.9,4.2);
\draw (0.5,4.7) node {$\delta$};

\draw (7.25,8) node [left,scale=1.5] {$\eta \equiv 0$};
\draw (7.25,4.5) node [left,scale=1.5] {$\eta \equiv 1$};
\draw (7.25,0.85) node [left,scale=1.5] {$\eta \equiv 0$};

\end{tikzpicture}
\caption{Reduction of the control domain.}
\label{fig eta}
\end{figure}

{\em Proof of Theorem \ref{thm same speed} (sufficient part).}
Assume that \ref{hyp temps} and \ref{hyp caract} hold and let us show that System \eqref{syst same speed} is exactly controllable at time $T$.
Let $y^0,y^1 \in C^1_L([0,L])^n$.

Let $Q_x$ be the controllability Gramian defined by \eqref{def cont gram}.
For every $(t,x) \in [0,T]\times[0,L]$, we set
\begin{equation}\label{def psi cont}
\psi(t,x) \eqdef
\eta(t,\excar(t,x))\Bx(t)^*\Rx(T,t)^* Q_x^{-1} \left(y^1\left(\excar(T,x)\right)-\Rx(T,0)y^0(x)\right).
\end{equation}
Since $\eta \in C^1_L([0,T]\times[0,L])$, we have $(t,x) \mapsto \eta(t,\excar(t,x)) \in C^1([0,T]\times[0,L])$.
Using Lebesgue's dominated convergence theorem, we obtain $x \mapsto Q_x \in C^1([0,L])^{n\times n}$.
On the other hand, since $y^1 \in C^1_L([0,L])^n$, we have $x \mapsto y^1(\excar(T,x)) \in C^1([0,L])^n$.
As a result,
$$\psi \in C^1([0,T]\times[0,L])^m.$$
For every $(t,x) \in [0,T]\times[0,L]$, we set
\begin{equation}\label{def cont final}
\cont(t,x)\eqdef
\left\{\begin{array}{cl}
\psi(t,\excar^{-1}(t,x)) & \mbox{ if }  (t,x) \in \Tau', \\
\psi(t,0) & \mbox{ if }  x=\excar(t,0), \\
0 & \mbox{ if } (t,x) \in \partial \left([0,T]\times[0,L]\right).
\end{array}
\right.
\end{equation}
 From \eqref{eta zero} we have $\cont \in C^1([0,T]\times[0,L])^m$ and \eqref{reduc constraint}.
Moreover,
$$\cont(t,\excar(t,x))=\psi(t,x), \quad \forall (t,x) \in [0,T]\times[0,L].$$
Let $y \in C^1([0,T]\times[0,L])^m$ be the solution to \eqref{syst same speed} associated with $\cont$ defined by \eqref{def cont final}.
For every $x \in [0,L)$, writing $y$ along the extended characteristics $\excar(\cdot,x)$, we see that $t \mapsto y(t,\excar(t,x))$ solves the ODE \eqref{equ caract} with $\psi$ defined by \eqref{def psi cont} and $z^0=y^0(x)$  (at least in the weak sense $W^{1,\infty}(0,T)^n$).
As a result we obtain $y(T,\excar(T,x))=y^1(\excar(T,x))$ for every $x \in [0,L)$.
Since $x \mapsto \excar(T,x)$ defines a bijective map from $[0,L)$ to $(0,L]$, we obtain that $y(T,x)=y^1(x)$ for every $x \in [0,L)$.
By continuity it follows that $y(T,x)=y^1(x)$ for every $x \in [0,L]$.
\endproof

\subsubsection{Proof of Theorem \ref{thm same speed}, necessary part}

Assume now that System \eqref{syst same speed} is exactly controllable at time $T$ and let us prove that this implies that \ref{hyp temps} and \ref{hyp caract} hold.

Assume first that $0<T \leq L-(b-a)$ and let $t_0=\max(0,T-a)$ and $x_0=-t_0+L$.
Note that $t_0 \in [0,T]$ and $x_0 \in [0,L]$.
Let $y^1=0$ and let $y^0 \in C^1_L([0,L])^n$ be such that
\begin{equation}\label{yz nn}
y^0(x_0) \neq 0.
\end{equation}
Writing $y$ along the characteristic $\car(s,t_0,0)$ for $s \in [t_0,T]$, gives
$$y(T,\car(T,t_0,0))=R_1(T,t_0)y(t_0,0)+ \int_{t_0}^T R_1(T,s)B(s)\cont(s,\car(s,t_0,0)) \, ds,$$
where $R_1$ is the resolvent associated with $t \in [t_0,T] \mapsto A(t,\car(t,t_0,0))$.
Now observe that, since $T \leq t_0+a$, we have $\car(s,t_0,0) \leq a$ for $s \in [t_0,T]$, so that, thanks to \eqref{constraint},
$$\cont(s,\car(s,t_0,0))=0, \quad \forall s \in [t_0,T].$$
As a result,
\begin{equation}\label{y in T}
y(T,\car(T,t_0,0))=R_1(T,t_0)y(t_0,0).
\end{equation}
Similarly, writing $y$ along the characteristic $\car(s,0,x_0)$ for $s \in [0,t_0]$, and using that $x_0 \geq b$, this leads to
$$y(t_0,L)=
y(t_0,\car(t_0,0,x_0))=R_2(t_0,0)y^0(x_0),$$
where $R_2$ is the resolvent associated with $t \in [0,t_0] \mapsto A(t,\car(t,0,x_0))$.
Since $y(t_0,0)=y(t_0,L)$, the previous equality can be combined with \eqref{y in T} and \eqref{yz nn} to show that $y(T,\car(T,t_0,0))\neq 0$ and therefore System \eqref{syst same speed} is not exactly controllable at time $T$.

We turn out to the necessity of \ref{hyp caract}.
Let $z^0,z^1 \in \R^n$ be fixed.
We then define $y^0 \equiv z^0$ and $y^1 \equiv z^1$, which belong to $C^1_L([0,L])^n$.
Thus, by assumption, there exists $\cont \in C^1_L([0,T]\times[0,L])^m$ that satisfies \eqref{constraint} such that the corresponding solution $y \in C^1([0,T]\times[0,L])^n$ to \eqref{syst same speed} satisfies
\begin{equation}\label{ytzero}
y(T,x)=z^1, \quad \forall x \in [0,L].
\end{equation}
For every $(t,x) \in [0,T]\times[0,L)$, we set
\begin{equation}\label{def psi cn}
\psi(t) \eqdef
\cont\left(t,\excar(t,x)\right).
\end{equation}
Since $\cont \in C^1_L([0,T]\times[0,L])^m$, we have $\psi \in C^1([0,T])^m$, while \eqref{constr ODE} follows from \eqref{constraint}.
Let $z \in C^1([0,T])^n$ be the solution to \eqref{equ caract} associated with $\psi$ defined by \eqref{def psi cn}.
Let $x \in [0,L)$ being fixed.
Writing $y$ along the extended characteristic $\excar(\cdot,x)$, we see that $t \in [0,T] \mapsto y(t,\excar(t,x))$ solves \eqref{equ caract} with $z^0=y^0(x)$ (at least in the weak sense $W^{1,\infty}(0,T)^n$).
By uniqueness of the solution to \eqref{equ caract} we then have
$$z(t)=y\left(t,\excar(t,x)\right), \quad \forall t \in [0,T].$$
In particular, $z(T)=z^1$ thanks to \eqref{ytzero}.
\endproof

\subsection{Semilinear systems}

Let us consider the following semilinear first order hyperbolic system with periodic boundary conditions:
\begin{equation}\label{syst semilin}
\left\{
\begin{array}{l}
y_t + y_x= f(y,\cont), \quad (t,x) \in [0,T]\times[0,L], \\
y(t,L)=y(t,0), \quad t \in [0,T], \\
y(0,x)= y^0(x), \quad x \in [0,L].
\end{array}
\right.
\end{equation}
We assume that the nonlinearity $f:\R^n \times \R^m \longrightarrow \R^n$ is of class $C^2$.

\begin{definition}
We say that $(\tilde{y},\tilde{\cont}) \in C^1_L([0,T]\times[0,L])^n \times C^1_L([0,T]\times[0,L])^m$ is a trajectory of System \eqref{syst semilin} if it is a classical solution to \eqref{syst semilin} for some $y^0 \in C^1_L([0,L])^n$ and if $\tilde{\cont}$ satisfies \eqref{constraint}.
\end{definition}

\begin{definition}
Let $(\tilde{y},\tilde{\cont})$ be trajectory of System \eqref{syst semilin}.
We say that System \eqref{syst semilin} is locally exactly controllable around the trajectory $(\tilde{y},\tilde{\cont})$ at time $T>0$ if, for every $\epsilon>0$, there exists $\mu>0$ such that, for every $y^0,y^1 \in C^1_L([0,T]\times[0,L])^n$ with
$$\norm{y^0-\tilde{y}(0,\cdot)}_{C^1} \leq \mu, \quad \norm{y^1-\tilde{y}(T,\cdot)}_{C^1} \leq \mu,$$
there exists a control $\cont \in C^1_L([0,T]\times[0,L])^m$ that satisfies \eqref{constraint} and a classical solution $y \in C^1_L([0,T]\times[0,L])^n$ to \eqref{syst semilin} such that
\begin{subequations}
\begin{gather}
y(T,x)=y^1(x), \quad \forall x \in [0,L],\\
\norm{y-\tilde{y}}_{C^1} \leq \epsilon, \label{estim def 2} \\
\norm{\cont-\tilde{\cont}}_{C^1} \leq \epsilon. \label{estim def 1}
\end{gather}
\end{subequations}
\end{definition}

Then, we have the following result.
The proof is classical and use the Banach fixed point theorem (see for instance \cite[Section 4.1]{Cor}).

\begin{theorem}\label{thm semilin}
Let $(\tilde{y},\tilde{\cont})$ be a trajectory of System \eqref{syst semilin}.
Assume that the linearization of system \eqref{syst semilin} around the trajectory $(\tilde{y},\tilde{\cont})$, that is the linear system
\begin{equation*}
\left\{
\begin{array}{l}
y_t + y_x= \ds \frac{\partial f}{\partial y} \left(\tilde{y}(t,x),\tilde{\cont}(t,x)\right) y + \frac{\partial f}{\partial \cont} \left(\tilde{y}(t,x),\tilde{\cont}(t,x)\right) \cont, \,\,(t,x) \in [0,T]\times[0,L], \\
y(t,L)=y(t,0), \quad t \in [0,T], \\
y(0,x)= y^0(x), \quad x \in [0,L],
\end{array}
\right.
\end{equation*}
is exactly controllable at time $T>0$.
Then, System \eqref{syst semilin} is locally exactly controllable around the trajectory $(\tilde{y},\tilde{\cont})$ at time $T$.
\end{theorem}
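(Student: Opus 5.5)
The plan is a Banach fixed point argument in $C^1$, with the linearized system handled through the explicit control operator built in the proof of Theorem \ref{thm same speed} (see \eqref{def psi cont}--\eqref{def cont final}). Write $y=\tilde y+w$ and $\cont=\tilde\cont+v$. Set $\tilde A(t,x)\eqdef -\frac{\partial f}{\partial y}(\tilde y,\tilde\cont)$ and $\tilde B\eqdef \frac{\partial f}{\partial \cont}(\tilde y,\tilde\cont)$. Then $(w,v)$ must solve
$$w_t+w_x+\tilde A w=\tilde B v+g(w,v),\qquad g(w,v)\eqdef f(\tilde y+w,\tilde\cont+v)-f(\tilde y,\tilde\cont)-\tfrac{\partial f}{\partial y}(\tilde y,\tilde\cont)w-\tfrac{\partial f}{\partial \cont}(\tilde y,\tilde\cont)v,$$
with $w(0)=y^0-\tilde y(0,\cdot)$ and $w(T)=y^1-\tilde y(T,\cdot)$.

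\textbf{Regularity of the coefficients.} Since $f\in C^2$ and $\tilde y,\tilde\cont\in C^1_L$, the coefficients satisfy $\tilde A,\tilde B\in C^1_L$. This is exactly the regularity required in Section \ref{sec same speed lin}.

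\textbf{Linear control operator.} By Theorem \ref{thm same speed} (sufficient part, with Remark \ref{rem estim control}) there is a \emph{linear} map
$$\Gamma:(w^0,w^1)\mapsto v,$$
given by \eqref{def psi cont}--\eqref{def cont final}. It satisfies the constraint \eqref{constraint}, is bounded in $C^1$, and steers the linearized system from $w^0$ to $w^1$. Linearity is visible from the explicit HUM-type formula.

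\textbf{Absorbing the nonlinearity.} Take $h\in C^1_L([0,T]\times[0,L])^n$ in a small ball. Let $w_h$ be the solution of the inhomogeneous problem with zero control, source $h$, and initial datum $w^0$. Choose the control $v=\Gamma\big(0,\,w^1-w_h(T)\big)$. By superposition, the solution $w$ with initial datum $w^0$, source $\tilde Bv+h$, reaches $w^1$ at time $T$. The fixed point map is then
$$\mathcal F(h)\eqdef g(w,v).$$

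\textbf{The fixed point step (main obstacle).} Everything hinges on $g$ behaving well in $C^1$.

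\emph{Regularity.} Because $f\in C^2$ and $w,v\in C^1$, we get $g(w,v)\in C^1$ with the required $x$-periodicity. Moreover
$$\norm{g(w,v)}_{C^1}\le C(\norm{w}_{C^1}+\norm{v}_{C^1})^2 .$$
Differentiating $g$ involves $\partial f$ evaluated at perturbed points minus its value at $\tilde{}$, times $\partial\tilde y$ and $\partial w$; this is where $C^2$ is used.

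\emph{Contraction.} The difference $g(w_1,v_1)-g(w_2,v_2)$ is bounded in $C^1$ by a small factor times $\norm{(w_1-w_2,v_1-v_2)}_{C^1}$. The only subtlety is that $\partial^2 f$ is merely continuous, so the Lipschitz estimate on $\partial_x g$ is not directly available in $C^1$.

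\emph{How to close it.} I would first try the Schauder fixed point theorem on a convex compact set. Concretely, work with $h$ bounded in $C^1$, note that $\mathcal F$ is continuous there, and get compactness from a slightly stronger estimate. If that fails, I would run the contraction in $C^0$ on a $C^1$-bounded closed set. A $C^1$-ball is closed in $C^0$, so Banach's theorem still applies there. Here the $C^0$ Lipschitz estimate for $g$ only needs $f\in C^2$, and $C^1$-boundedness of the iterates follows from the quadratic bound above.

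\textbf{Conclusion.} For $\mu$ small, the map sends the ball of radius $r=r(\mu)$ to itself and has a fixed point. Then $y=\tilde y+w$ and $\cont=\tilde\cont+v$ give the desired solution. The estimates \eqref{estim def 2} and \eqref{estim def 1} follow from the linear bounds, and \eqref{constraint} holds because $\tilde\cont$ and $v$ both satisfy it.
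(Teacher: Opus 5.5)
Your architecture is the classical Banach fixed-point argument the paper cites. It consists of the explicit linear right inverse $\Gamma$ from the sufficient part of Theorem~\ref{thm same speed}, the source iteration $h\mapsto g(w,v)$, and a fixed point. To use $\Gamma$ you should first invoke the necessary part of Theorem~\ref{thm same speed} to obtain \ref{hyp temps}--\ref{hyp caract} for the linearized system. There is, however, a genuine gap: the fixed-point step, which you flag as the crux, is never closed, and both fallbacks fail.

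\textbf{The fallbacks do not work.} The claim that a $C^1$-ball is closed in $C^0$ is false. A uniform limit of functions with uniformly bounded $C^1$ norm need only be Lipschitz; smoothing out a corner shows this. A $C^0$-contraction on the $C^0$-closure of your ball therefore gives a fixed point $h$ that is merely Lipschitz. Then $w$ and $v=\Gamma(0,w^1-w_h(T))$ are merely Lipschitz too. Transport equations do not regularize transversally to the characteristics, so nothing in your argument upgrades this to the classical $C^1_L$ solution and $C^1_L$ control that the definition requires. The Schauder route is also unsupported. $C^1$-bounded sets are not relatively compact in $C^1$, and with $C^1$ data and $C^1$ coefficients there is no extra regularity to provide compactness.

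\textbf{The missing observation.} The obstacle you identified does not exist: $f\in C^2$ already gives a $C^1$-Lipschitz estimate with small constant. Write $Y=(\tilde y,\tilde\cont)$ and $W=(w,v)$. Then
\[
\partial_x g(W)=\bigl(Df(Y+W)-Df(Y)-D^2f(Y)W\bigr)Y_x+\bigl(Df(Y+W)-Df(Y)\bigr)W_x .
\]
Use $Df(Y+W_1)-Df(Y+W_2)=\int_0^1 D^2f\bigl(Y+W_2+s(W_1-W_2)\bigr)\,ds\,(W_1-W_2)$. For $\norm{W_i}_{C^1}\le r$ this gives
\[
\norm{\partial_x g(W_1)-\partial_x g(W_2)}_{C^0}\le C\bigl(\omega(r)+r\bigr)\norm{W_1-W_2}_{C^1},
\]
where $\omega$ is a modulus of continuity of $D^2f$ on a compact neighbourhood of the range of $Y$. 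The same bound holds for $\partial_t g$, and the $C^0$ part is immediate. Only continuity of $D^2f$ is used, never a Lipschitz bound on it. Since $h\mapsto(w,v)$ is affine and bounded in $C^1$, the map $\mathcal F$ is a contraction of a small closed ball of $C^1_L$ into itself, and Banach's theorem applies directly.

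\textbf{A smaller error.} Your quadratic bound $\norm{g}_{C^1}\le C(\norm{w}_{C^1}+\norm{v}_{C^1})^2$ is false for $f$ merely $C^2$. The first term in $\partial_x g$ is only of size $\omega(\norm{W}_{C^0})\norm{W}_{C^0}$. What does hold, taking $W_2=0$ above, is $\norm{g(W)}_{C^1}\le C(\omega(r)+r)\norm{W}_{C^1}$. That bound is enough for the self-map property.
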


\section{Quasilinear systems with different velocities}

In what follows, we denote by $e_1=(1,0)$, $e_2=(0,1)$, the canonical basis of $\R^2$.

We are now interested in the controllability of the following $2 \times 2$ quasilinear system by one control force
\begin{equation}\label{syst quasilin}
\left\{
\begin{array}{l}
y_t + \Lambda(y)y_x + f(y)=e_1\cont, \quad (t,x) \in [0,T]\times[0,L], \\
y(t,L)=y(t,0), \quad t \in [0,T], \\
y(0,x)= y^0(x), \quad x \in [0,L],
\end{array}
\right.
\end{equation}
where
$$\Lambda(y)=\diag(\lambda_1(y),\lambda_2(y)), \quad \forall y \in \R^2,$$
with
\begin{equation}\label{ordre lambda}
\lambda_1(y)<\lambda_2(y) , \quad \lambda_1(y) \neq 0, \quad \lambda_2(y) \neq 0, \quad \forall y \in \R^2,
\end{equation}
and
$$f(y)= \begin{pmatrix}f_1(y)\\f_2(y)\end{pmatrix}, \quad \forall y \in \R^2,$$
with
$$f_1(0)=f_2(0)=0,$$
so that $(0,0)$ is a trajectory of system \eqref{syst quasilin}.
We assume that $\lambda_1,\lambda_2, f_1, f_2 \in C^{\infty}(\R^2)$.
In particular, System \eqref{syst quasilin} is hyperbolic (see for instance \cite[Pages 1-2]{LY}).

Then, for every $T>0$, there exist $C>0$ and $\mu>0$ such that, for every $\cont \in C^k_L([0,T]\times[0,L])$ and every $y^0 \in C^k_L([0,T]\times[0,L])^2$ ($k \in \N^*$) such that
$$\norm{\cont}_{C^k} \leq \mu, \quad \norm{y^0}_{C^k} \leq \mu,$$
there exists a unique semi-global classical solution $y \in C^k_L([0,T]\times[0,L])^2$ to \eqref{syst quasilin}, and this solution satisfies the estimate
$$\norm{y}_{C^k} \leq C \left(\norm{y^0}_{C^k}+\norm{\cont}_{C^k}\right).$$
We refer to \cite{LJ,Wan} for a proof of this well-posedness result.

The technical point in the method we will develop lies in the algebraic solvability (see Section \ref{sect alg} below).
Since the eigenvalues of $\Lambda(y)$ might be distinct, the more the number $n$ of equations of the system is large, the more it becomes difficult to solve algebraically the system.
That is why we restrict ourselves to the case of $n=2$ equations.
We also see during this step that we have to take the derivatives of the coefficients of the equations, which shows the loss of derivatives.
When $n>2$, the algebraic solvability becomes a difficult task that involves the same arguments as in \cite{CL} to be solved.
This is not the purpose of the present paper but this could be the investigation of further developments.
On the other hand, once the algebraic solvability is established (under some conditions), the rest of the proof of Theorem \ref{thm quasilin} below remains unchanged whether $n=2,3,\ldots$

Our main result is the following local exact controllability result around the trajectory $(0,0)$.
\begin{theorem}\label{thm quasilin}
Assume that
\begin{equation}\label{hyp temps quasilin}
T>\left(L-(b-a)\right) \max \ens{\frac{1}{\abs{\lambda_1(0)}},\frac{1}{\abs{\lambda_2(0)}}},
\end{equation}
and
\begin{equation}\label{hyp f}
\frac{\partial f_2}{\partial y_1}(0) \neq 0.
\end{equation}
Then, for every $\epsilon>0$, there exists $\mu>0$, such that, for every $y^0,y^1 \in C^6_L([0,L])^2$ that satisfy
$$\norm{y^0}_{C^6} \leq \mu, \quad \norm{y^1}_{C^6} \leq \mu,$$
there exists a control $\cont \in C^1_L([0,T]\times[0,L])$  that satisfy
\begin{subequations}
\begin{gather}
\supp \cont \subset [\delta,T-\delta]\times[a+\delta,b-\delta], \label{reduc supp} \\
\norm{\cont}_{C^1} \leq \epsilon,
\end{gather}
\end{subequations}
for every $0<\delta<\min(T,(b-a)/2)/4$ such that
\begin{equation}\label{cond delta}
T-4\delta>\left(L-(b-a-8\delta)\right) \max \ens{\frac{1}{\abs{\lambda_1(0)}},\frac{1}{\abs{\lambda_2(0)}}},
\end{equation}
and such that the corresponding solution $y \in C^1_L([0,T]\times[0,L])^n$ to \eqref{syst quasilin} satisfy
\begin{subequations}
\begin{gather}
y(T,x)=y^1(x), \quad \forall x \in [0,L],\\
\norm{y}_{C^1} \leq \epsilon.
\end{gather}
\end{subequations}
\end{theorem}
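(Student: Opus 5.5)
The plan follows the strategy announced in the introduction: fictitious control, then algebraic solvability, then Gromov's Nash--Moser type fixed point theorem \cite[Section 2.3.2, Main Theorem]{Gro}.

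\textbf{Step 1: two controls.} We first solve the problem with two controls: find $(\hat y,\hat\cont_1,\hat\cont_2)$ solving
$$\hat y_t+\Lambda(\hat y)\hat y_x+f(\hat y)=\hat\cont_1 e_1+\hat\cont_2 e_2,$$
steering $y^0$ to $y^1$, with $\supp\hat\cont_i\subset[2\delta,T-2\delta]\times[a+4\delta,b-4\delta]$ and small norms in a high $C^k$ space. For this we use the extension-of-domain method together with the semi-global constructive method of \cite{LR2003,LJ}. Condition \eqref{cond delta} guarantees that all characteristics, with speeds close to $\lambda_i(0)$, cross the reduced control strip in time $T-4\delta$. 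Periodicity is handled by the doubling trick $\tilde y(t,x)=y(t,L-x)$. With data in $C^6$, one expects the controls and the state to be $C^6$ with norm $O(\mu)$. We then look for the true solution as a perturbation. Write $y=\hat y+\hat z$ and $\cont=\hat\cont_1+v_1$. The nonlinear system
$$\mathcal N(\hat z,v_1,v_2)=(\text{residual}),\qquad v_2=-\hat\cont_2,$$
must be solved with $\hat z$ and $v$ compactly supported in $(0,T)\times(a,b)$, so that $\hat z(0)=\hat z(T)=0$ and the boundary conditions are untouched.

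\textbf{Step 2: algebraic solvability.} The key step is to show that the linear operator
$$\mathcal L(z,v)=\bigl(z_t+\Lambda z_x+Az-v_1e_1,\ z_{2,t}+\ldots\bigr),$$
linearized around a small smooth state, is algebraically solvable in the sense of Definition \ref{def alg}. That is, there is a differential operator $\mathcal M$ with $\mathcal L\circ\mathcal M=\Id$ on pairs $(g_1,g_2)$, where the coefficients of $\mathcal M$ depend on finitely many derivatives of the background. We use the second equation,
$$z_{2,t}+\lambda_2 z_{2,x}+a_{21}z_1+a_{22}z_2=g_2.$$
Since $a_{21}\approx\partial_{y_1}f_2(0)\neq0$ by \eqref{hyp f}, we set $z_2=0$ and $z_1=g_2/a_{21}$. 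Then $v_1$ is defined from the first equation,
$$v_1=z_{1,t}+\lambda_1 z_{1,x}+a_{11}z_1-g_1.$$
The loss of derivatives is visible here: $v_1$ costs one derivative of $g$ and of the coefficients. Crucially, $\mathcal M$ is local, so it preserves the support in $(\delta,T-\delta)\times(a+\delta,b-\delta)$. We also need $\mathcal M$ to depend smoothly on the background, which requires uniform invertibility of $a_{21}$; this follows by smallness. The quasilinear structure adds terms from $\Lambda'(y)y_x$ in the linearization. These only modify the coefficients and still allow the same triangular solution, provided we linearize in the form where $z_1$ appears algebraically in the second line.

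\textbf{Step 3: Gromov's theorem and conclusion.} Apply Gromov's theorem: an operator admitting a right inverse of finite order, depending smoothly on the background, is locally surjective (from $C^{k+s}$ to $C^{k}$) near the base point. Concretely, the residual is $\hat\cont_2 e_2$, which is small in $C^5$ or so. We obtain $(\hat z,v_1)$ solving the full nonlinear equation with zero second control, with $C^1$ norm controlled by the $C^{6}$ size of the data. The derivative count fixes $6$: one derivative is lost in $\mathcal M$, and Gromov's theorem loses a few more (order $2s+$const). So $C^6$ data give a $C^1$ control. Finally, by the uniqueness in the well-posedness result for \eqref{syst quasilin}, $y=\hat y+\hat z$ is the solution. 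It equals $y^1$ at time $T$ because $\hat z$ vanishes near $t=T$, it is $\epsilon$-small for $\mu$ small, and the support constraint \eqref{reduc supp} holds.

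\textbf{Main obstacle.} The hardest part is verifying Gromov's hypotheses precisely: the exact function spaces, smooth-tame dependence of $\mathcal M$ on the background, and the precise derivative bookkeeping yielding the $C^6$ requirement. The construction in Step 2 is elementary. The $C^6$ regularity of the two-control solution in Step 1 (higher-regularity semi-global solutions) is also delicate but standard.
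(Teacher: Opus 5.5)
Your architecture matches the paper's: two fictitious controls via the extension method, then the right inverse $z_2=0$, $z_1=g_2/a_{21}$, $v_1$ read off from the first equation, then Gromov's theorem. But Step 3 has a real gap.

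\textbf{The support of the correction.} You invoke Gromov's theorem only as local surjectivity. You then assume, without justification, that the correction $(\hat z,v_1)$ is compactly supported in $(0,T)\times(a,b)$. Everything in your conclusion rests on this: $\hat z(0)=\hat z(T)=0$, the periodic boundary conditions being untouched, and the support constraint \eqref{reduc supp}. Local surjectivity says nothing about supports. The locality of the \emph{linear} operator $\opM$ also does not pass by itself to the nonlinear Nash--Moser inverse.

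What is missing is the locality clause of Theorem~\ref{thm Gromov}, combined with normalization, followed by a gluing:
\begin{enumerate}
\item Apply the theorem on a smooth bounded $Q$ with $\adh{Q_\delta}\subset Q$ and $\adh{Q}\subset(0,T)\times(a,b)$, taking $\nu=\delta/2$.
\item The residual vanishes outside $\adh{Q}_{2\delta}$. So at every point of the open annulus $Q_\delta\setminus\adh{Q}_{3\delta/2}$, the data $(z,g)$ coincide with $(z,0)$ on a $\nu$-ball.
\item Hence $\opD^{-1}_z(g)=\opD^{-1}_z(0)=z$ on that annulus.
\item Only now can you define $(y,\Theta)$ as $\opD^{-1}_z(g)$ inside $Q_\delta$ and as the two-control state with zero control outside.
\end{enumerate}

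\textbf{Smallness.} The $C^1$-smallness of $y$ and $\Theta$ is not an output of the theorem as stated, so your claimed estimate is unsupported. The paper obtains it by replacing $\setA$ with $\setA_\epsilon$, the intersection with a $C^1$-ball, so the inverse lands there automatically.

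\textbf{Derivative count at your base point.} Theorem~\ref{thm Gromov} with $d=2$, $r=s=1$ forces $\sigma_0\ge 4$. This means the base point must lie in $\setA\cap C^{6}$ and the right-hand side in $C^{5}$. Your base point $(\hat y,\hat\Theta_1)$ therefore needs $\hat\Theta_1\in C^6$. The two-control construction from $C^6$ data only gives $C^5$ controls, because the control is $y_t+\Lambda(y)y_x+f(y)$ for a $C^6$ state; this contradicts your expectation of $C^6$ controls. The paper avoids this by linearizing at $(y^*,0)$ and putting \emph{both} fictitious controls into $g=-\opD(y^*,0)\in C^5$. With your choice of base point, the count would only close for $C^7$ data.
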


\begin{remark}
Recall that, given $C^1$ data $y^0$ and $\cont$, \eqref{syst quasilin} has a $C^1$ solution.
Now observe that in Theorem \ref{thm quasilin} the initial and final data are smoother than the control.
This gap between the regularities is not a technical matter (even though the regularity $C^6$ can probably be weakened).
Indeed, consider the linear system
$$\left\{
\begin{array}{l}
u_t + u_x=\cont, \quad (t,x) \in [0,T]\times[0,L], \\
v_t + \lambda v_x +u =0, \quad (t,x) \in [0,T]\times[0,L], \\
u(t,L)=u(t,0), \quad v(t,L)=v(t,0), \quad t \in [0,T], \\
u(0,x)= u^0(x), \quad v(0,x)=v^0(x), \quad x \in [0,L],
\end{array}
\right.
$$
with $\lambda>1$.
Then, for $L-(b-a)<T<L$, writing the system along the characteristics $(t,t+\lambda x)$, we obtain the relation
\begin{multline*}
\int_0^T \cont(t,t+\lambda x) \, dt=u^1(T+\lambda x)-u^0(\lambda x) \\
-(1-\lambda)\lambda\left((v^1)'(T+\lambda x)-(v^0)'(\lambda x)\right), \quad \forall x \in [0,(L-T)/\lambda].
\end{multline*}
This shows that, acting by $C^1$ controls requires $C^2$ initial and final data.
\end{remark}

\subsection{Controllability by two controls}

The starting point of the proof of Theorem \ref{thm quasilin} is to control System \eqref{syst quasilin} with 2 internal controls.
We are going to use the results of \cite{LR2003} on the controllability of $n \times n$ quasilinear systems by $n$ boundary controls and an extension method to obtain the following result.
Observe the different levels of regularity between the state and the controls.

\begin{proposition}\label{prop n cont}
Let us consider the system
\begin{equation}\label{syst star}
\left\{
\begin{array}{l}
y_t + \Lambda(y)y_x + f(y)=e_1\cont_1+e_2\cont_2, \quad (t,x) \in [0,T]\times[0,L], \\
y(t,L)=y(t,0), \quad t \in [0,T], \\
y(0,x)= y^0(x), \quad x \in [0,L].
\end{array}
\right.
\end{equation}
Assume that \eqref{hyp temps quasilin} holds and let \eqref{cond delta} be satisfied for $\delta/2$ (in place of $\delta$).
Then, for every $\epsilon>0$, there exists $\mu>0$ such that, for every $y^0,y^1 \in C^k_L([0,L])^2$ ($k \geq 2$) that satisfy
$$\norm{y^0}_{C^k} \leq \mu, \quad \norm{y^1}_{C^k} \leq \mu,$$
there exist controls $\cont_1,\cont_2 \in C^{k-1}_L([0,T]\times[0,L])$ that satisfy \eqref{reduc supp} and
$$\norm{\cont_1}_{C^{k-1}}+\norm{\cont_2}_{C^{k-1}} \leq \epsilon,$$
and such that the corresponding solution $y \in C^{k}_L([0,T]\times[0,L])^n$ to \eqref{syst star} satisfies
\begin{subequations}
\begin{gather}
y(T,x)=y^1(x), \quad \forall x \in [0,L], \label{donnee finale}\\
\norm{y}_{C^{k}} \leq \epsilon.
\end{gather}
\end{subequations}
\end{proposition}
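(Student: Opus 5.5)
The strategy is the extension method mentioned in the introduction: read the periodic problem on a strip in which the control region $[a,b]$ is cut out, solve a boundary control problem there with the result of \cite{LR2003}, and then glue the pieces with cut-offs so that the error created by gluing is absorbed by the two internal controls.

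\textbf{Step 1 (geometry).} By periodicity, identify $[0,L]$ with a circle and cut it along the control region. This turns the uncontrolled part into the interval $I=[b-\delta,a+L+\delta]$ (in the periodic variable), of length $L-(b-a)+2\delta$. The time $T-4\delta$ and the shrunken interval, as given by \eqref{cond delta} with $\delta/2$, are designed so that $T-2\delta$ exceeds $\mathrm{length}\times\max 1/\abs{\lambda_i(0)}$ on a slightly enlarged interval $I'=[b-\delta/2\cdots]$. The enlargement leaves room for cut-off layers of width $\delta/4$.

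\textbf{Step 2 (boundary control on the strip).} On $[\delta/2,T-\delta/2]\times I'$, consider the system without control and with $n=2$ boundary controls, one at each end. Since $\lambda_1,\lambda_2\neq 0$, each characteristic family can be controlled from an appropriate end. Time-translate and use the semi-global results of \cite{LJ,LR2003}, which give local exact boundary controllability in $C^k$ for small $C^k$ data, provided the time exceeds the transport time. The data are obtained as follows. On $[0,\delta/2]$, take the uncontrolled solution from $y^0$; on $[T-\delta/2,T]$, take the uncontrolled solution solved backward from $y^1$. By well-posedness these are small in $C^k$. Steering between their traces at $t=\delta/2$ and $t=T-\delta/2$ on $I'$ yields a $C^k$ solution $\hat y$ on the strip with $\norm{\hat y}_{C^k}\le C\mu$. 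The $C^k$ compatibility at the corners is handled as in \cite{LR2003}; I expect this to be the main technical point, in particular obtaining $C^k$ rather than merely $C^1$ regularity for the result of \cite{LR2003}. The remedy is to cite their $C^k$ version or to extend the data smoothly beyond the endpoints.

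\textbf{Step 3 (gluing).} Define three objects:
\begin{itemize}[label={}]
\item $y_f$, the free solution from $y^0$ on $[0,T]$;
\item $y_b$, the free backward solution from $y^1$;
\item $\hat y$ on the strip.
\end{itemize}
With smooth cut-offs $\chi(t)$ and $\rho(x)$, set
\[
y=\rho\,\bigl[\chi y_f+(1-\chi)y_b\bigr]+(1-\rho)\,\hat y ,
\]
suitably arranged so that:
\begin{itemize}[label={}]
\item $y=y_f$ near $t=0$ and $y=y_b$ near $t=T$;
\item $y=\hat y$ on the part of $[\delta,T-\delta]\times I$ outside $[a+\delta,b-\delta]$, where $\hat y$ matches $y_f$ and $y_b$ near its time ends;
\item all transitions take place inside $[\delta,T-\delta]\times[a+\delta,b-\delta]$.
\end{itemize}
The $t$-transition is made within $[\delta/2,\delta]$, where $\hat y=y_f$, so it is harmless there.

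\textbf{Step 4 (controls).} Define
\[
(\cont_1,\cont_2)^{T}:=y_t+\Lambda(y)y_x+f(y).
\]
This vanishes wherever $y$ coincides with an exact solution, so it is supported in $[\delta,T-\delta]\times[a+\delta,b-\delta]$, and it is $C^{k-1}$ with norm $\le C\mu\le\epsilon$ since the construction loses one derivative. Moreover $y(T)=y^1$ and $\norm{y}_{C^k}\le\epsilon$. By uniqueness of semi-global solutions, $y$ is the solution of \eqref{syst star} with these controls, which completes the proof.
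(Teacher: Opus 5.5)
Your overall route is the paper's: cut the circle along the control region, apply \cite{LR2003} on the resulting strip, glue with the free forward and backward solutions by cut-offs, and define $(\Theta_1,\Theta_2)$ as the residual. The paper does this in two passes. It first produces $\supp\Theta\subset[0,T]\times[a,b]$ with $\partial_t^i\Theta=0$ at $t=0,T$. It then lets the system evolve freely on $[0,\delta]$ and backward on $[T-\delta,T]$, and re-applies the construction on $[\delta,T-\delta]$ with $[a+\delta,b-\delta]$. You fold the two passes into one.

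However, Step 3 as written does not give \eqref{reduc supp}. Wherever $\rho>0$ and $0<\chi<1$, $y$ involves the convex combination $\chi y_f+(1-\chi)y_b$ of two different solutions, so the residual is generically nonzero there. Putting the $\chi$-transition in $[\delta/2,\delta]$ therefore creates control in $(\delta/2,\delta)\times[a+\delta,b-\delta]$.

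The justification ``where $\hat y=y_f$'' is also wrong. $\hat y$ and $y_f$ only share their trace at $t=\delta/2$. For $t>\delta/2$ they coincide only in the domain of determinacy of that trace. That domain excludes a neighbourhood of the ends of $I'$, precisely where $0<\rho<1$, because the boundary controls act immediately. The same objection applies to ``$\hat y$ matches $y_f$ and $y_b$ near its time ends''. So your construction only yields $\supp\Theta\subset[\delta/2,T-\delta/2]\times[a+\delta,b-\delta]$. Also, $I'$ must be larger than $I$, not $[b-\delta/2,\dots]$, so that $\hat y$ is defined where $0<\rho<1$.

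The repair stays within your framework:
\begin{itemize}
\item Pose the boundary problem on $[\delta,T-\delta]\times I'$ with $I'=[b-\delta-\delta',a+L+\delta+\delta']$, $0<\delta'\le\delta$, steering $y_f(\delta,\cdot)$ to $y_b(T-\delta,\cdot)$. The hypothesis (\eqref{cond delta} with $\delta/2$) reads $T-2\delta>(L-(b-a)+4\delta)\max_i 1/\abs{\lambda_i(0)}$, which suffices for this strip.
\item Extend $\hat y$ by $y_f$ on $[0,\delta]$ and by $y_b$ on $[T-\delta,T]$. This extension is $C^k$, because two $C^k$ solutions with the same trace at $t=\delta$ have the same mixed derivatives up to order $k$ there. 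This is the analogue of the paper's vanishing of $\partial_t^i\Theta$ at the time ends, and you should state it.
\item Take $\chi\equiv1$ on $[0,\delta]$ and $\chi\equiv0$ on $[T-\delta,T]$. Take $\rho\equiv0$ off $(a+\delta,b-\delta)$ and $\rho\equiv1$ near the ends of $I'$, so that $(1-\rho)\hat y$ extends by zero smoothly.
\end{itemize}
Then $y=y_f$ for $t\le\delta$, $y=y_b$ for $t\ge T-\delta$, and $y=\hat y$ off $(a+\delta,b-\delta)$, which also gives $L$-periodicity for free. Your Step 4 then goes through. The one remaining black box is the $C^k$ version of \cite{LR2003}, and the paper invokes it in the same way without further justification.
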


\begin{proof}
According to \eqref{ordre lambda} we can always assume that $\lambda_1(y)<0<\lambda_2(y)$ for every $y \in \R^2$, the two other cases being similar. Let us then consider the following boundary control problem on the domain $[0,T]\times[b-\delta,a+\delta+L]$:
\begin{equation}\label{syst bcontrols}
\left\{
\begin{array}{l}
y^*_t + \Lambda(y^*)y^*_x + f(y^*)=0, \quad (t,x) \in [0,T]\times[b-\delta,a+\delta+L], \\
y^*_1(t,a+\delta+L)=H_1(t),\quad y^*_2(t,b-\delta)=H_2(t), \quad t \in [0,T], \\
y^*(0,x)= \ext{y^0}(x), \quad x \in [b-\delta,a+\delta+L], \\
y^*(T,x)= \ext{y^1}(x), \quad x \in [b-\delta,a+\delta+L], \\
\end{array}
\right.
\end{equation}
where $H_1,H_2 \in C^k([0,T])$ are boundary controls, and where we have extended by periodicity $y^0$ and $y^1$ to
$$
\ext{y^0}(x) \eqdef
\left\{\begin{array}{cl}
y^0(x) & \mbox{ if } x \in [b-\delta,L], \\
y^0(x-L) & \mbox{ if } x \in [L,a+\delta+L],
\end{array}\right.
$$
$$
\ext{y^1}(x)\eqdef
\left\{\begin{array}{cl}
y^1(x) & \mbox{ if } x \in [b-\delta,L], \\
y^1(x-L) & \mbox{ if } x \in [L,a+\delta+L].
\end{array}\right.
$$
Note that, since $y_0,y_1 \in C^k_L([0,L])^n$, we have $\ext{y^0},\ext{y^1} \in C^k([b-\delta,a+\delta+L])^2$.
Since, by assumption, $T$ satisfies
$$
T>\left(L-(b-a-2\delta)\right) \max \ens{\frac{1}{\abs{\lambda_1(0)}},\frac{1}{\abs{\lambda_2(0)}}},
$$
by \cite[Theorem 1.2]{LR2003}, for every $\mu>0$ small enough, for every $y^0, y^1$ such that
$$\norm{y^0}_{C^k} \leq \mu, \quad \norm{y^1}_{C^k} \leq \mu,$$
there exists $y^* \in C^k([0,T]\times[b-\delta,a+\delta+L])^2$ that satisfies \eqref{syst bcontrols} with $\norm{y^*}_{C^k}$ small.
Let $y^{**}$ be any $C^k([0,T]\times[0,L])^2$ function such that $\norm{y^{**}}_{C^k} \leq \norm{y^*}_{C^k}$ and
$$
y^{**}(t,x)=
\left\{\begin{array}{cl}
y^*(t,x+L) & \mbox{ if } x \in [0,a+\delta], \\
y^*(t,x) & \mbox{ if } x \in [b-\delta,L].
\end{array}\right.
$$
On the other hand, let us introduce $\ext{y} \in C^k([0,T]\times[0,L])^2$ defined by
$$\ext{y}(t,x) \eqdef \eta_1(t)u(t,x)+\eta_2(t)v(t,x)$$
where $\eta_1,\eta_2 \in C^{\infty}([0,T])$ are time cut-off functions with $0 \leq \eta_i \leq 1$ and
\begin{subequations}
\begin{gather}
\eta_1(0)=1, \quad \eta_1(T)=0, \quad \eta_1^{(i)}(0)=\eta_1^{(i)}(T)=0,\quad \forall i \in \inter{1,k+1},\\
\eta_2(0)=0, \quad \eta_2(T)=1, \quad \eta_2^{(i)}(0)=\eta_2^{(i)}(T)=0,\quad \forall i \in \inter{1,k+1},
\end{gather}
\end{subequations}
and $u,v \in C^k([0,T]\times[0,L])^2$ are the solutions to the forward and backward free evolving systems
$$
\left\{
\begin{array}{l}
u_t + \Lambda(u)u_x + f(u)=0, \quad (t,x) \in [0,T]\times[0,L], \\
u(t,L)=u(t,0), \quad t \in [0,T], \\
u(0,x)= y^0(x), \quad x \in [0,L],
\end{array}
\right.
$$
and
$$
\left\{
\begin{array}{l}
v_t + \Lambda(v)v_x + f(v)=0, \quad (t,x) \in [0,T]\times[0,L], \\
v(t,L)=v(t,0), \quad t \in [0,T], \\
v(T,x)= y^1(x), \quad x \in [0,L].
\end{array}
\right.
$$
Let now $\xi \in C^{\infty}([0,L])$ be a space cut-off function with $0\leq \xi \leq 1$ and
$$
\xi(x)=\left\{\begin{array}{rl}
1 & \mbox{ if } x \in [0,a] \cup [b,L], \\
0 & \mbox{ if } x \in [a+\delta,b-\delta]. \\
\end{array}\right.
$$
Let $y$ and $\cont$ be defined by
$$
y(t,x) \eqdef \xi(x)y^{**}(t,x)+(1-\xi(x))\ext{y}(t,x),
$$
and
\begin{equation}\label{def contstar}
\cont \eqdef
y_t + \Lambda(y)y_x + f(y).
\end{equation}
By construction, $y \in C^k([0,T]\times[0,L])^2$ and $\cont \in C^{k-1}([0,T]\times[0,L])^2$.
Still by construction, $(y,\cont)$ solves \eqref{syst star}, $y$ satisfies \eqref{donnee finale} and $\cont$ satisfies
\begin{subequations}
\begin{gather}
\supp \cont \subset [0,T]\times[a,b],\\
\partial^i_t\cont(0,\cdot)=\partial^i_t \cont(T,\cdot)=0,\quad \forall i\in \inter{0,k+1}.
\end{gather}
\end{subequations}
The smallnesses of $y$ and $\cont$ follow from the smallnesses of $y^*$ and $\ext{y}$.

To obtain \eqref{reduc supp} we let system \eqref{syst star} evolves freely (without control), forward on the domain $[0,\delta]\times[0,L]$ and backward on the domain $[T-\delta,T]\times[0,L]$, and denote by $y^\delta(x)$ (\textit{resp.} $y^{T-\delta}(x)$) its value at time $t=\delta$ (\textit{resp.} $t=T-\delta$).
By the previous step, replacing $[a,b]$ by  
$[a+\delta, b-\delta]$, there exists a control $\cont$ with \eqref{reduc supp} and
$$\partial^i_t\cont(\delta,\cdot)=\partial^i_t \cont(T-\delta,\cdot)=0,\quad \forall i\in \inter{0,k+1},$$
that steers the solution to System \eqref{syst star}, posed on the time reduced domain $[\delta,T-\delta]\times[0,L]$, from $y^{\delta}$ to $y^{T-\delta}$.
Thus, we can extend $\cont$ by zero outside $(\delta,T-\delta)\times(a+\delta,b-\delta)$.
Note that the smallnesses are preserved.
\end{proof}

\subsection{Algebraic solvability}\label{sect alg}

In this section we recall the notion of algebraic solvability and the fixed point theorem of M.~Gromov.
We refer to \cite[Section 2.3]{Gro} for more details.

In what follows $Q$ is a smooth bounded open subset of $\R^2$ and $\opD: C^\rpourregularite (\ext{Q})^p \longrightarrow C^0(\ext{Q})^q$ ($p,q \in \N^*$) is a nonlinear $C^\infty$-differential operator of order $\rpourregularite  \in \N^*$.
We recall that this means that there exists a $C^{\infty}$-function $F:\R^{n_{\rpourregularite ,p}} \longrightarrow \R^q$, where $n_{\rpourregularite ,p}=2+p \card \ens{(\alpha_1,\alpha_2) \in \N^2 \, \middle| \, \alpha_1+\alpha_2 \leq \rpourregularite }$, such that $\opD$ writes
$$\opD(z)=F(J^\rpourregularite  z), \quad \forall z \in C^\rpourregularite (\ext{Q})^p,$$
where $J^\rpourregularite  z$ denotes the $\rpourregularite $-jet of $z$, that is the function defined for every $(t,x) \in \ext{Q}$ by
$$J^\rpourregularite  z(t,x) \eqdef\left((t,x),z(t,x),\ldots,\frac{\partial^{\abs{\alpha}} z}{{\partial t}^{\alpha_1} {\partial x}^{\alpha_2} } (t,x), \ldots, \frac{\partial^\rpourregularite  z}{{\partial t}^{\alpha_1} {\partial x}^{\alpha_2} } (t,x) \right)
\in \R^{n_{\rpourregularite ,p}}.$$
Clearly, the map $\opD$ is of class $C^{\infty}$ and we denote by $\opL_{z}:C^\rpourregularite (\ext{Q})^p \longrightarrow C^0(\ext{Q})^q$ its total differential at  $z \in C^\rpourregularite (\ext{Q})^p$.

\begin{definition}\label{def difrel}
We say that $\setA$ is a differential relation of order $d$ ($d \in \N$) if there exists $\setR \subset \R^{n_{d,p}}$ such that
$$
\setA \eqdef
\ens{z \in C^d(\ext{Q})^p \, \middle| \, J^d z(t,x) \in \setR, \quad \forall (t,x) \in \ext{Q}}.
$$
It is said to be open if it is an open subset of $C^d(\ext{Q})^p$.
\end{definition}

\begin{definition}\label{def alg}
Let $\setA \subset C^d(\ext{Q})^p$ be a differential relation of order $d$.
We say that the operator $\opD$ admits an infinitesimal inversion of order $\spourregularite   \in \N$ over $\setA$ if there exists a family of linear differential operators of order $\spourregularite  $, $\opM_z: C^\spourregularite  (\ext{Q})^q \longrightarrow C^0(\ext{Q})^p$, $z \in \setA$, such that:
\begin{enumerate}
\item
For every $g \in C^\spourregularite  (\ext{Q})^q$ being fixed, $z \mapsto \opM_{z}(g)$ is a differential operator of order $d$ (possibly nonlinear) and it is a $C^{\infty}$-differential operator in $(z,g)$.
\item
Algebraic solvability:
for every $z \in \setA^{d+\spourregularite  }\eqdef \setA \cap C^{d+\spourregularite  }(\ext{Q})^p$, we have
$$\opL_{z} \circ \opM_{z}=\Id_{C^{\rpourregularite +\spourregularite  }(\ext{Q})^q},$$
\end{enumerate}
\end{definition}

The proof of Theorem \ref{thm quasilin} is based on the following result \cite[Section 2.3.2, Main Theorem]{Gro}:

\begin{theorem}\label{thm Gromov}
Let $\setA \subset C^d(\ext{Q})^p$ be a nonempty open differential relation of order $d$.
Assume that $\opD$ admits an infinitesimal inversion of order $\spourregularite  $ over $\setA$.
Let
\begin{gather}
\label{propsigma0}
\sigma_0>\max(d,2\rpourregularite +\spourregularite  ),
\\
\label{propnu}
\nu\in (0,+\infty).
\end{gather}
Then, there exist a family of sets $\setB_{z} \subset C^{\sigma_0+\spourregularite  }(\ext{Q})^q$ and a family of operators $\opD^{-1}_{z}: \setB_{z} \longrightarrow \setA$, where ${z} \in \setA^{\sigma_0+\rpourregularite +\spourregularite  }$, such that the following properties hold.
\begin{enumerate}
\item
Neighborhood property:
for every $z \in \setA^{\sigma_0+\rpourregularite +\spourregularite  }$, we have $0 \in \setB_{z}$ and the set $\setB$ defined by
$$\setB= \bigcup_{z \in \setA^{\sigma_0+\rpourregularite +\spourregularite  }} \ens{z} \times \setB_{z},$$
is an open subset of $C^{\sigma_0+\rpourregularite +\spourregularite  }(\ext{Q})^p \times C^{\sigma_0+\spourregularite  }(\ext{Q})^q$.
\item
Inversion property:
\begin{equation}\label{inv prop}
\opD\left(\opD^{-1}_{z}(g)\right)=\opD(z)+g, \quad \forall \left(z,g\right) \in \setB.
\end{equation}
\item
Normalization property:
$$\opD^{-1}_{z}(0)=z, \quad \forall z \in \setA^{\sigma_0+\rpourregularite +\spourregularite  }.$$
\item
Locality:
for every $(t,x) \in \ext{Q}$ and for every $\left(z_1,g_1\right)$, $\left(z_2,g_2\right) \in \setB$, if we have
$$\left(z_1,g_1\right)(\tilde{t},\tilde{x})=\left(z_2,g_2\right)(\tilde{t},\tilde{x}), \quad \forall (\tilde{t},\tilde{x}) \in \ball((t,x),\nu) \cap \ext{Q},$$
then,
$$\opD^{-1}_{z_1}(g_1)(t,x)=\opD^{-1}_{z_2}(g_2)(t,x).$$
\end{enumerate}
\end{theorem}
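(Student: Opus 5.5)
We would follow Gromov's Nash--Moser scheme, in which a right inverse of the linearized operator is available but loses derivatives. Fix $z\in\setA^{\sigma_0+\rpourregularite+\spourregularite}$ and $g$ small in $C^{\sigma_0+\spourregularite}$. We look for $w=\opD^{-1}_z(g)$ solving $\opD(w)=\opD(z)+g$, and we reformulate this as the evolution equation
$$\frac{d}{d\theta}\opD(z_\theta)=g,\qquad z_0=z,$$
or, discretely, as a Newton-type iteration $z_{j+1}=z_j+\opM_{S_j z_j}\bigl(S_j h_j\bigr)$. Here $h_j$ is the current error $\opD(z)+g-\opD(z_j)$ (or a suitably smoothed increment of $g$), and $S_j=S_{\theta_j}$ is a family of smoothing operators with $\theta_j\to\infty$.

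The smoothing operators are taken to be convolutions with mollifiers $\rho_{\theta}$ whose support has radius $r_j$. We choose $\sum_j r_j<\nu$ (for instance $r_j=\nu 2^{-j-2}$), after extending functions from $\ext Q$ to a neighbourhood by a fixed local extension operator. Since $\opD$, $\opL_z$ and $\opM_z$ are differential operators, hence local, each step only enlarges the domain of dependence by $r_j$. In the limit $w(t,x)$ therefore depends only on $(z,g)$ restricted to $\ball((t,x),\nu)\cap\ext Q$, which gives the locality property. Normalization is automatic: if $g=0$, every increment vanishes, so $z_j\equiv z$. We define $\setB_z$ as the set of $g$ for which the iteration converges and all iterates stay in $\setA$; this set is open and contains $0$ by the estimates below.

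The analytic core consists of the standard tame estimates.
\begin{enumerate}
\item Smoothing estimates: $\|S_\theta u\|_{C^{\beta}}\le C\theta^{\beta-\alpha}\|u\|_{C^\alpha}$ for $\beta\ge\alpha$, and $\|u-S_\theta u\|_{C^\alpha}\le C\theta^{\alpha-\beta}\|u\|_{C^\beta}$.
\item Tame bounds for $\opM_z(g)$ in $C^\alpha$, linear in $g$ and depending on finitely many derivatives of $z$. These hold because $\opM_z(g)$ is a differential operator of order $\spourregularite$ in $g$ and order $d$ in $z$.
\item Quadratic error bounds for the Taylor remainder $\opD(z+h)-\opD(z)-\opL_z h$, measured in $C^0$ by $\|h\|_{C^{\rpourregularite}}^2$ times high norms.
\end{enumerate}
The algebraic solvability $\opL_{z}\circ\opM_{z}=\Id$ is used only at smooth points $z\in\setA^{d+\spourregularite}$. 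This is why $\opM$ is evaluated at the smoothed iterate $S_jz_j$, and why an extra error term $(\opL_{z_j}-\opL_{S_jz_j})\opM_{S_jz_j}$ must be bounded by the same smoothing estimates.

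One then proves by induction the two bounds $\|z_j-z\|_{C^{\sigma_0}}\le C\epsilon$ and $\|h_j\|_{C^{\sigma_0}}\le \epsilon\,\theta_j^{-\kappa}$ for suitable exponents. Openness of $\setA$ keeps $z_j$ in $\setA$, and continuity in $(z,g)$ gives openness of $\setB$. Passing to the limit, the increments converge in $C^{\sigma}$ for some $\sigma$ with $\max(d,2\rpourregularite+\spourregularite)<\sigma<\sigma_0$. The limit $w$ then satisfies the inversion property \eqref{inv prop} with enough derivatives for $\opD(w)$ to make sense.

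The main obstacle is balancing the exponents. The order $\rpourregularite$ of $\opD$, the loss $\spourregularite$ of $\opM$, and the quadratic error must be controlled simultaneously in low and high norms, and the threshold $\sigma_0>2\rpourregularite+\spourregularite$ appears exactly there: the quadratic term loses $2\rpourregularite$ derivatives and the inversion loses $\spourregularite$. We would first try a geometric choice $\theta_{j+1}=\theta_j^{3/2}$ and check that the convergence condition reduces to this inequality. Making this compatible with the fixed supports $r_j$ of the mollifiers, needed for locality, is the delicate point. We expect to handle it by scaling the mollifier $\rho_{\theta_j}$ so that its support radius is $\min(r_j,\theta_j^{-1})$, which only helps the smoothing estimates.
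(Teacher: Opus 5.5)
The paper does not prove this statement; it quotes it from Gromov's \emph{Partial Differential Relations} (Section 2.3.2, Main Theorem). What you are reconstructing is therefore Gromov's Nash-type argument, and your overall architecture is in line with it: Newton steps $z_{j+1}=z_j+\opM_{S_jz_j}(S_jh_j)$, with $\opM$ evaluated at smoothed points, and local mollifiers with $\sum_j r_j<\nu$. There is, however, a step that fails as written: locality for \emph{different} $z$'s.

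\textbf{Locality versus a $z$-dependent schedule.} Your domain-of-dependence argument gives $\opD^{-1}_{z_1}(g_1)(t,x)=\opD^{-1}_{z_2}(g_2)(t,x)$ only if both pairs are processed with the same smoothing schedule $(\theta_j,r_j)$. Your convergence argument, however, forces the schedule to depend on $z$, for two reasons.
\begin{enumerate}
\item At the first step you need $S_{\theta_0}z\in\setA$, so that $\opM_{S_{\theta_0}z}$ is defined and $\opL\circ\opM=\Id$ holds there.
\item The error $(\opL_{z_j}-\opL_{S_jz_j})\opM_{S_jz_j}(S_jh_j)$ contains $(I-S_j)z$. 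This is independent of $g$, and it can only be beaten by taking $\theta_0$ large in terms of $\norm{z}_{C^{\sigma_0+r+s}}$ and of the margin of $z$ inside $\setA$.
\end{enumerate}
Both choices then fail:
\begin{enumerate}
\item If $\theta_0=\theta_0(z)$, two pairs that agree on $\ball((t,x),\nu)$ but differ elsewhere run through different schemes. Since the linearized problem is underdetermined, nothing forces their outputs to coincide at $(t,x)$.
\item If $\theta_0$ is fixed, then for a general open relation some $z\in\setA^{\sigma_0+r+s}$ has $S_{\theta_0}z\notin\setA$. For such $z$ the first step is undefined, so $\setB_z$ contains no neighbourhood of $0$ and the neighbourhood property fails.
\end{enumerate}
The paper's application only uses locality with $z_1=z_2=(y^*,0)$, where this is harmless. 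The statement, however, claims locality for all pairs in $\setB$. You need an extra localisation. One option is a position-dependent scale determined by local norms of $z$. Another is to patch local solves over a finite-colour partition of unity $g=\sum_i\varphi_i g$, giving each solve a scale that depends only on $z$ near its patch, and using same-$z$ locality to see that each solve changes the iterate only near that patch.

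\textbf{The convergence induction.} The analytic core is asserted rather than shown, and the induction you state cannot close. You bound the residual $h_j=\opD(z)+g-\opD(z_j)$ in the same norm $C^{\sigma_0}$ as $z_j-z$, although $h_j$ involves $r$ derivatives of $z_j$. Moreover, the leading term of $h_{j+1}$ is $(I-S_j)h_j$, which is small only in a norm strictly lower than one in which $h_j$ is controlled, while $g$ is available only in $C^{\sigma_0+s}$. A working scheme must carry two kinds of bounds at once:
\begin{enumerate}
\item a low norm in which the residual decays like $\theta_j^{-\kappa}$;
\item high norms in which the iterates and the smoothed points $S_jz_j$ may grow like $\theta_j^{\beta}$ (their $C^{k+d}$ norms enter the tame bound for $\opM$).
\end{enumerate}
You then interpolate between them. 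The condition $\sigma_0>\max(d,2r+s)$ must come out of that bookkeeping, and that is exactly the part you leave as ``suitable exponents''.

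\textbf{Two smaller points.}
\begin{enumerate}
\item Shrinking the mollifier support below $\theta_j^{-1}$ does not ``only help''; it worsens $\norm{S u}_{C^\beta}\le C\theta^{\beta-\alpha}\norm{u}_{C^\alpha}$. Instead, choose $\theta_0$, depending on $\nu$ only, so that $\theta_j^{-1}\le r_j$ for all $j$.
\item ``The set of $g$ for which the iteration converges'' need not be open. Define $\setB$ by explicit smallness conditions, such as $\norm{g}_{C^{\sigma_0+s}}<\epsilon(z)$ with $\epsilon$ positive and continuous on $\setA^{\sigma_0+r+s}$.
\end{enumerate}
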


\subsection{Proof of Theorem \ref{thm quasilin}}

All along this section, $\delta>0$ is a fixed real number such that \eqref{cond delta} holds.
Let us denote
$$Q_\delta \eqdef (\delta,T-\delta)\times(a+\delta,b-\delta),$$
and let $Q$ be an open subset of $\R \times \R$ of class $C^\infty$ such that (see Fig. \ref{fig match})
$$\ext{Q_\delta} \subset Q, \quad \ext{Q}\subset (0,T)\times(a,b).$$
\begin{figure}[h!]
\centering
\begin{tikzpicture}[scale=0.35]
\draw[ultra thick,color=black,fill=white]
  plot[smooth cycle]
  coordinates{
   (-2,-2) (-2,6) (-1.5,18) (8,17) (14,17) (12.5,6) (14,-1)
  };

\draw (-4,-4) rectangle (16,19);
\draw[dashed,fill=gray!40] (0,0) rectangle (12,15);
\draw[dashed,fill=white] (2,2) rectangle (10,13);
\draw (4,4) rectangle (8,11);

\draw[thick] (11,9) circle (2.3);
\draw (11,9) node {$\bullet$};
\draw (11.15,8.5) node[scale=1.15] {$\ext{y}=y^{*}$};

\draw [thick] [<->] (10.3,9.6)--(11.7,9.6);

\draw (6,5) node[scale=1.5] {$Q_{2\delta}$};
\draw (4.25,3) node[scale=1.5] {$Q_{3\delta/2}$};
\draw (2,1) node[scale=1.5] {$Q_{\delta}$};
\draw (0,-1.5) node[scale=1.5] {$Q$};

\end{tikzpicture}
\caption{Matching $\ext{y}$ to $y^{*}$.}
\label{fig match}
\end{figure}

Let us introduce the operator $\opD$ defined by
$$
\begin{array}{rccl}
\opD: & C^1(\ext{Q})^3 & \longrightarrow & C^0(\ext{Q})^2 \\
& (y,\cont) &\longmapsto& y_t+\Lambda(y)y_x+f(y)-e_1\cont.
\end{array}
$$
This is a nonlinear $C^{\infty}$-differential operator of order 1.
Note that $(y,\cont)$ solves the equation of \eqref{syst quasilin} if, and only if,
$$\opD(y,\cont)=0.$$
Let $\opL_{\left(\tilde{y},\tilde{\cont}\right)}$ be the differential of the operator $\opD$ at $(\tilde{y},\tilde{\cont}) \in C^1(\ext{Q})^3$:
$$
\begin{array}{rccl}
\opL_{\left(\tilde{y},\tilde{\cont}\right)}: & C^1(\ext{Q})^3 & \longrightarrow & C^0(\ext{Q})^2 \\
& (y,\cont) &\longmapsto& y_t+\Lambda(\tilde{y})y_x+\left(\Lambda'(\tilde{y})y\right)\tilde{y}_x+f'(\tilde{y})y -e_1\cont.
\end{array}
$$
This is a nonlinear $C^{\infty}$-differential operator of order 1 in $\left(\tilde{y},\tilde{\cont}\right)$.

Let us denote
\begin{gather}
y=\begin{pmatrix}u\\v\end{pmatrix},
\quad
\tilde{y}=\begin{pmatrix}\tilde u\\\tilde v\end{pmatrix},
\\
\left(\Lambda'(\tilde{y})y\right)\tilde{y}_x+f'(\tilde{y})y=\begin{pmatrix} a_{11} & a_{12} \\ a_{21} & a_{22} \\
\end{pmatrix}\begin{pmatrix}u\\v\end{pmatrix}.
\end{gather}
We have
\begin{gather}\label{a12}
a_{11}=\frac{\partial \lambda_1}{\partial u}(\tilde{y})\frac{\partial \tilde{u}}{\partial x}
+\frac{\partial f_1}{\partial u}(\tilde{y})
,
\quad
a_{21}=\frac{\partial \lambda_2}{\partial u}(\tilde{y})\frac{\partial \tilde{u}}{\partial x}
+\frac{\partial f_2}{\partial u}(\tilde{y}).
\end{gather}

\begin{proposition}\label{prop inf inv}
Let $\setA$ be defined by
\begin{equation}\label{diff rela}
\setA \eqdef
\ens{\left(\tilde{y},\tilde{\cont}\right) \in C^{2}(\ext{Q})^3 \, \middle| \,
a_{21}(t,x) \neq 0, \quad \forall (t,x) \in \ext{Q}}.
\end{equation}
Then, $\setA$ is an open differential relation of order $2$ and the operator $\opD$ admits an infinitesimal inversion of order $1$ over $\setA$.
\end{proposition}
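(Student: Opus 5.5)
The plan is to build the right inverse $\opM_{(\tilde y,\tilde\cont)}$ by hand, solving the linearized system \emph{algebraically}. The idea is to kill the second component of the state and let the first one absorb the right-hand side of the second equation through the coupling coefficient $a_{21}$.

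First I check that $\setA$ is an open differential relation of order $2$. By \eqref{a12}, $a_{21}(t,x)$ is a smooth function $G(J^1(\tilde y,\tilde\cont)(t,x))$ of the $1$-jet, namely
$$G=\frac{\partial \lambda_2}{\partial u}(\tilde y)\tilde u_x+\frac{\partial f_2}{\partial u}(\tilde y).$$
I set $\setR\eqdef\{\text{2-jets } j \,:\, G(\text{1-jet part of } j)\neq 0\}$, which is open in $\R^{n_{2,3}}$, so $\setA$ has the form of Definition \ref{def difrel} with $d=2$. For openness in $C^2(\ext{Q})^3$: since $\ext{Q}$ is compact, a continuous $a_{21}$ that does not vanish satisfies $\abs{a_{21}}\geq c>0$. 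The map $(\tilde y,\tilde\cont)\mapsto a_{21}$ is continuous from $C^2$ (indeed already from $C^1$) into $C^0(\ext{Q})$, so a $C^2$-small perturbation keeps $\abs{a_{21}}\geq c/2$.

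Next I construct $\opM$. Given $(g_1,g_2)\in C^1(\ext{Q})^2$, I want $(u,v,\cont)$ with
$$u_t+\lambda_1(\tilde y)u_x+a_{11}u+a_{12}v-\cont=g_1,\qquad v_t+\lambda_2(\tilde y)v_x+a_{21}u+a_{22}v=g_2.$$
I choose $v\eqdef 0$ and $u\eqdef g_2/a_{21}$, so that the second equation holds identically. Then I define
$$\cont\eqdef u_t+\lambda_1(\tilde y)u_x+a_{11}u-g_1,$$
which makes the first equation hold. Thus
$$\opM_{(\tilde y,\tilde\cont)}(g_1,g_2)\eqdef\Bigl(\tfrac{g_2}{a_{21}},\,0,\,\partial_t\bigl(\tfrac{g_2}{a_{21}}\bigr)+\lambda_1(\tilde y)\partial_x\bigl(\tfrac{g_2}{a_{21}}\bigr)+a_{11}\tfrac{g_2}{a_{21}}-g_1\Bigr).$$
This operator is linear in $g$ and involves at most first derivatives of $g$, so it is a linear differential operator of order $\spourregularite=1$.

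Its coefficients involve $a_{21}$, $a_{11}$, $\lambda_1(\tilde y)$ and the first derivatives of $a_{21}$. Since $a_{21}$ depends on $\tilde y_x$, its derivatives involve second derivatives of $\tilde y$. Hence, for fixed $g$, the map $z\mapsto\opM_z(g)$ is a differential operator of order $d=2$. It is $C^\infty$ jointly in $(J^2z,J^1g)$ on $\setR$, because $1/a_{21}$ is smooth where $a_{21}\neq0$.

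Finally, for $z\in\setA^{3}$ and $g\in C^{2}$ (the regularities in Definition \ref{def alg}, with $d+\spourregularite=3$ and $\rpourregularite+\spourregularite=2$), all compositions are classical. Substituting directly into $\opL_z$ gives $\opL_z\circ\opM_z(g)=g$, which is the algebraic solvability.

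I expect no real obstacle here. The only delicate points are the regularity bookkeeping that justifies the orders $d=2$ and $\spourregularite=1$, and the uniform nonvanishing of $a_{21}$ on $\ext{Q}$ that gives openness.
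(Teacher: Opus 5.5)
Your proposal is correct and follows the paper's proof essentially verbatim: the paper also sets $v=0$ and $u=g_2/a_{21}$, and defines $\cont$ by \eqref{def cont}, which is exactly the expanded form of your $\cont=u_t+\lambda_1(\tilde y)u_x+a_{11}u-g_1$. Your openness argument (a uniform lower bound for $\abs{a_{21}}$ on the compact set $\ext{Q}$, together with continuity of $(\tilde y,\tilde\cont)\mapsto a_{21}$ from $C^1$ into $C^0$) and your order bookkeeping ($d=2$, $\spourregularite=1$) supply the details the paper leaves as ``we can check''.
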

\begin{proof}
Clearly, $\setA$ is a differential relation, associated with the set $\setR$ consisting of the points
$$\left((t,x),\begin{pmatrix} \alpha_{00} \\ \beta_{00} \\ \gamma_{00}, \end{pmatrix}, \begin{pmatrix} \alpha_{10} \\ \beta_{10} \\ \gamma_{10}, \end{pmatrix},\begin{pmatrix} \alpha_{01} \\ \beta_{01} \\ \gamma_{01}, \end{pmatrix},\begin{pmatrix} \alpha_{20} \\ \beta_{20} \\ \gamma_{20}, \end{pmatrix},\begin{pmatrix} \alpha_{11} \\ \beta_{11} \\ \gamma_{11}, \end{pmatrix},\begin{pmatrix} \alpha_{02} \\ \beta_{02} \\ \gamma_{02}, \end{pmatrix}\right)
\in \ext{Q} \times \R^{3 \times 6},$$
such that
$$\frac{\partial \lambda_2}{\partial u} \begin{pmatrix}\alpha_{00} \\ \beta_{00} \end{pmatrix} \alpha_{01}
+\frac{\partial f_2}{\partial u}\begin{pmatrix} \alpha_{00} \\ \beta_{00} \end{pmatrix} \neq 0.$$
We can check that $\setA$ is open in $C^{2}(\ext{Q})^3$.

Let now $\left(\tilde{y},\tilde{\cont}\right) \in \setA$.
Let $(g_1,g_2) \in C^1(\ext{Q})^2$.
We have to solve the equation $\opL_{\left(\tilde{y},\tilde{\cont}\right)}(y,\cont)=(g_1,g_2)$ in such a way that $(y,\cont)$ is a linear combination of derivatives of $g_1$ and $g_2$.
 The equation $\opL_{\left(\tilde{y},\tilde{\cont}\right)}(y,\cont)=(g_1,g_2)$  rewrites as
$$
\left\{\begin{array}{lr}
u_t+\lambda_1(\tilde{y})u_x+ a_{11}u+a_{12}v-\cont &=g_1, \quad \mbox{ in } \ext{Q}, \\
v_t+\lambda_2(\tilde{y})v_x+ a_{21}u+a_{22}v &=g_2, \quad \mbox{ in } \ext{Q}.
\end{array}\right.
$$
By definition of $\setA$, we have
\begin{equation}\label{coeff inv}
a_{21}(t,x) \neq 0, \quad \forall (t,x) \in \ext{Q}.
\end{equation}
In this case, the algebraic solvability is not very difficult: we first put
\begin{equation}\label{def v}
v=0,
\end{equation}
so that the second equation simply becomes $a_{21}u=g_2$.
As a result, using \eqref{coeff inv} we can take
\begin{equation}\label{def u}
u=\frac{1}{a_{21}} g_2.
\end{equation}
 Finally, it remains to set
\begin{equation}\label{def cont}
\cont=-g_1 +\frac{1}{a_{21}} \left((g_2)_t+\lambda_1(\tilde{y})(g_2)_x\right)
+\left(\left(\frac{1}{a_{21}}\right)_t+\lambda_1(\tilde{y})\left(\frac{1}{a_{21}}\right)_x+ \frac{a_{11}}{a_{21}}\right)g_2.
\end{equation}
(Note that \eqref{def cont}, together with \eqref{a12}, shows why $ C^{2}(\ext{Q})^3$ cannot be replaced by $C^{1}(\ext{Q})^3$ in Proposition \ref{prop inf inv}.)

Then, the following family of differential operators satisfies all the required properties ($s=1$) to be an infinitesimal inversion of $\opD$ over $\setA$:
$$
\begin{array}{rccl}
\opM_{\left(\tilde{y},\tilde{\cont}\right)} : & C^1(\ext{Q})^2 & \longrightarrow & C^0(\ext{Q})^3 \\
& (g_1,g_2) &\longmapsto& ((u,v),\cont),
\end{array}
$$
where $u,v$ and $\cont$ are respectively defined by \eqref{def u}, \eqref{def v} and \eqref{def cont}.
\end{proof}

{\em Proof of Theorem \ref{thm quasilin}.}
Let $\epsilon>0$ be fixed.
Let $\setA$ be the open differential relation of order $2$ defined by \eqref{diff rela} and set
\begin{gather}
\label{defAepsilon}
\setA_\epsilon \eqdef \setA \cap \left\{(\tilde{y},\tilde{\cont}) \in C^1(\ext{Q})^3 \quad \middle| \quad 
\norm{\tilde y}_{C^1} < \varepsilon, \, \norm{\tilde{\Theta}}_{C^1}< \varepsilon\right\}.
\end{gather}
Note that $\setA_\epsilon$ is an open differential relation of order 2 which is nonempty since $0 \in \setA_\epsilon$ by assumption \eqref{hyp f}.
We choose
\begin{gather}\label{defnu}
  \nu\eqdef \frac{\delta}{2},
  \\
 d=2,\, \rpourregularite=1,\, \spourregularite=1,\,   \sigma_0=4.
  \label{defsigma0}
\end{gather}
Note that, from \eqref{defnu}, one gets \eqref{propnu} and, from \eqref{defsigma0}, one gets \eqref{propsigma0}.

Thanks to Proposition \ref{prop inf inv}, $\opD$ admits an infinitesimal inversion of order $1$ over $\setA_\epsilon$.
Thus, we can apply Theorem \ref{thm Gromov}  which provides a family of sets $\setB_{\left(\tilde{y},\tilde{\cont}\right)} \subset C^{5}(\ext{Q})^2$ and a family of operators $\opD^{-1}_{\left(\tilde{y},\tilde{\cont}\right)}: \setB_{\left(\tilde{y},\tilde{\cont}\right)} \longrightarrow \setA_\epsilon$, where ${\left(\tilde{y},\tilde{\cont}\right)} \in \setA_\epsilon^{6}\eqdef\setA_\epsilon \cap C^{6}(\ext{Q})^3$, such that all the properties listed in this theorem hold.

Since $0 \in \setA_\epsilon$, $\setB$ also contains $0$.
Since $\setB$ is open, there exists $\rho>0$ such that
\begin{equation}\label{Bcontientvoisinage}
\left\{\left(\left(\tilde y, \tilde{\Theta}\right), g\right)\in C^6(\ext{Q})^3\times C^5(\ext{Q})^2 \quad \middle| \quad 
\norm{\tilde y}_{C^6} \leq \rho, \, \norm{\tilde{\Theta}}_{C^6}\leq \rho, \,
\norm{g}_{C^5}\leq \rho   \right\}\subset \setB.
\end{equation}
On the other hand, by Proposition \ref{prop n cont}, there exists $\mu>0$ such that, for every $y^0,y^1
\in C^{6}_L([0,L])^2$ that satisfy
$$\norm{y^0}_{C^{6}} \leq \mu, \quad \norm{y^1}_{C^{6}} \leq \mu,$$
there exist $\cont^*_1,\cont^*_2 \in C^{5}([0,T]\times[0,L])$ and $y^* \in C^{6}([0,T]\times[0,L])^2$ such that
$$\opD(y^*,0)=-e_1 \cont_1^*-e_2 \cont_2^*,$$
$$y^*(T,x)=y^1(x), \quad \forall x \in [0,L],$$
$$\supp \cont^*_1 \subset Q_{2\delta}, \quad \supp \cont^*_2 \subset Q_{2\delta},$$
and
$$\norm{y^*}_{C^{6}} < \epsilon', \quad \norm{\cont^*_1}_{C^{5}}+\norm{\cont^*_2}_{C^{5}} < \epsilon',$$
where $\epsilon'=\min(\epsilon,\rho)$.
In particular,
$$\left(y^*,0,-e_1\cont^*_1-e_2\cont^*_2\right) \in \setB,$$
and we can set
$$(\ext{y},\ext{\cont}) \eqdef \opD^{-1}_{(y^*,0)}\left(-e_1\cont^*_1-e_2\cont^*_2\right).$$
By definition $(\ext{y},\ext{\cont}) \in C^1(\ext{Q})^3$ and satisfies
$$\opD(\ext{y},\ext{\cont})=\opD(y^*,0)-e_1\cont^*_1-e_2\cont^*_2=0.$$

Let us now prove that we can match $\ext{y}$ to $y^*$ and $\ext{\cont}=0$ in the open neighborhood $Q_{\delta} \backslash \adh{Q}_{3\delta/2}$ (see Fig. \ref{fig match} below).
Let then $(t,x) \in Q_{\delta} \backslash \adh{Q}_{3\delta/2}$ be fixed and let us show that
\begin{equation}\label{equality}
\ext{y}(t,x)=y^*(t,x), \quad \ext{\cont}(t,x)=0.
\end{equation}
Since $\cont^*_1$ and $\cont^*_2$ are supported in $\adh{Q}_{2\delta}$, we have
$$\left(y^*,0,-e_1\cont^*_1-e_2\cont^*_2\right)(\tilde{t},\tilde{x})=\left(y^*,0,0\right)(\tilde{t},\tilde{x}), \quad \forall (\tilde{t},\tilde{x}) \in \ball((t,x),\delta/2),$$
and $\left(y^*,0,-e_1\cont^*_1-e_2\cont^*_2\right), \left(y^*,0,0\right) \in \setB$.
Thus, by \eqref{defnu}, the locality and normalization properties, we obtain \eqref{equality}.
To conclude the proof, it remains to set
$$
y(t,x) \eqdef \left\{\begin{array}{cl}
\ext{y}(t,x) & \mbox{ if } (t,x) \in Q_{\delta}, \\
y^*(t,x) & \mbox{ if } (t,x) \in [0,T]\times [0,L] \backslash Q_{\delta}, \\
\end{array}\right.
$$
and
$$
\cont(t,x) \eqdef \left\{\begin{array}{cl}
\ext{\cont}(t,x) & \mbox{ if } (t,x) \in Q_{\delta}, \\
0 & \mbox{ if } (t,x) \in [0,T]\times [0,L] \backslash Q_{\delta}. \\
\end{array}\right.
$$
Then, $(y,\cont)$ solves \eqref{syst quasilin}.
Since $y=y^*$ near the boundary of $[0,T]\times[0,L]$ it satisfies the same periodic boundary conditions $y(\cdot,L)=y(\cdot,0)$, the same initial condition $y(0,\cdot)=y^0$ and the same final condition $y(T,\cdot)=y^1$.
Finally, note that the smallnesses of $y$ and $\cont$ follow from the smallness of $y^*$ and the definition
\eqref{defAepsilon} of $\setA_\epsilon$.
\endproof

\Appendix

\section{Proof of Lemma \ref{key lemma}}
\label{secproolkeylemma}

Let us compute $\tau_k(x,b)-\tau_k(x,a)$ for every $k \in \inter{\kmin(x,b),\kmax(x,a)}$ and for every $x \in [0,L)$:
\begin{multline*}
\tau_k(x,b)-\tau_k(x,a)= \\
\left\{\begin{array}{cll}
b-x+kL & \mbox{ if } a-x+kL \in (-\infty,0) & \mbox{ and } b-x+kL \in [0,T], \\
T & \mbox{ if } a-x+kL \in (-\infty,0) & \mbox{ and } b-x+kL \in (T,+\infty), \\
b-a & \mbox{ if } a-x+kL \in [0,T] & \mbox{ and } b-x+kL \in [0,T], \\
T-a+x-kL & \mbox{ if } a-x+kL \in [0,T] & \mbox{ and } b-x+kL \in (T,+\infty).
\end{array}\right.
\end{multline*}
Thus, we see that the main problem that we have to handle is the fact that the length $\tau_0(x,b)-\tau_0(x,a)$ (\textit{resp.} $\tau_{\ceil{\frac{T-a}{L}}}(x,b)-\tau_{\ceil{\frac{T-a}{L}}}(x,a)$) goes to zero as $x$ goes to $b^-$ (\textit{resp.} $p(a)^+$).
We are going to approximate uniformly the Gramian associated with \eqref{equ caract}-\eqref{constr ODE} by modifying it a little bit near the points $b$ and $p(a)$ in order to avoid the aforementioned problems.
This has to be done in such a way that the invertibility is preserved.
Let
$$\ell_0=\min\left(T-a,b-a,L-p(a),b-L+\ceil{\frac{T-a}{L}}L\right)/2,$$
(note that $\ell_0$ is positive thanks to \ref{hyp temps}).
Then, for every $x \in [0,L)$ and $0 \leq \ell <\ell_0$ we introduce the approximated Gramian
$$
Q(x,\ell) =
Q_0(x,\ell)+\sum_{k=1}^{\ceil{\frac{T-a}{L}}-1} Q_k(x,\ell)+Q_{\ceil{\frac{T-a}{L}}}(x,\ell),
$$
where, for every $k \in \inter{1,\ceil{\frac{T-a}{L}}-1}$,
$$Q_k(x,\ell)=\int_{\tau_k(x,a)+\ell}^{\tau_k(x,b)-\ell} G(s,x) \, ds,$$
with $G(s,x)=\Rx\left(T,s\right)\Bx(s) \Bx(s)^*\Rx\left(T,s\right)^*$, and
$$
Q_0(x,\ell)=
\left\{\begin{array}{cl}
\ds \int_{\tau_0(x,a)+\ell}^{\tau_0(x,b)-\ell} G(s,x) \, ds & \mbox{ if } x \in [0,b-2\ell), \\
\ds 0 & \mbox{ if } x \in [b-2\ell,L),
\end{array}
\right.
$$
and where
$$
Q_{\ceil{\frac{T-a}{L}}}(x,\ell)= \\
\left\{\begin{array}{cl}
0 & \mbox{ if } x \in [0,p(a)+2\ell], \\
\ds \int_{\tau_{\ceil{\frac{T-a}{L}}}(x,a)+\ell}^{\tau_{\ceil{\frac{T-a}{L}}}(x,b)-\ell} G(s,x) \, ds & \mbox{ if } x \in (p(a)+2\ell,L). \\
\end{array}
\right.
$$

{\em Step 1:}
Firstly, let us prove that there exists $r>0$ such that, for every $x \in [0,L)$,
\begin{equation}\label{prop inv}
\ball\left(Q(x,0),r\right) \subset \gln,
\end{equation}
where $\gln$ denotes again the set of invertible matrices of size $n$ and $\ball(M,\rho)$ the open ball of center $M \in \R^{n \times n}$ and radius $\rho>0$.
Since $\lim_{\substack{x \to L \\ x<L}} Q(x,0)=Q(0,0)$, we can extend $Q(\cdot,0)$ by continuity to $[0,L]$ (still denoted by $Q(\cdot,0)$) with $Q(L,0)=Q(0,0)$.
By assumption \ref{hyp caract}, for every $x \in [0,L]$, $Q(x,0) \in \gln$.
Since $\gln$ is open, there exists $r(x)>0$ such that
\begin{equation}\label{inv ac x}
\ball\left(Q(x,0),r(x)\right) \subset \gln.
\end{equation}
On the other hand,
$$Q([0,L],0) \subset \bigcup_{x \in [0,L]} \ball\left(Q(x,0),\frac{r(x)}{2}\right),$$
and $Q([0,L],0)$ is compact (by continuity of $Q(\cdot,0)$ on all $[0,L]$), so that there exists $x_1, \ldots, x_q \in [0,L]$ such that
$$
Q([0,L],0) \subset \bigcup_{x \in \ens{x_1, \ldots, x_q}} \ball\left(Q(x,0),\frac{r(x)}{2}\right).
$$
We define
$$r=\min\ens{\frac{r(x_1)}{2},\ldots,\frac{r(x_q)}{2}}.$$
Thus, for every $x \in [0,L]$, for every $M \in \ball\left(Q(x,0),r\right)$, there exists $x_i \in \ens{x_1,\ldots,x_q}$, such that
$$
\norm{M-Q(x_i,0)}\leq
\norm{M-Q(x,0)}+\norm{Q(x,0)-Q(x_i,0)}
<r+\frac{r(x_i)}{2} \leq r(x_i),$$
that is $M \in \ball(Q(x_i,0),r(x_i))$ and shows that $M$ is invertible by \eqref{inv ac x}.

{\em Step 2:}
By construction we have
$$\sup_{x \in [0,L)}\norm{Q(x,\ell)-Q(x,0)} \xrightarrow[\ell \to 0]{} 0.$$
Thus, there exists $\delta>0$ small enough such that, for every $x \in [0,L)$, we have
$$Q(x,2\delta) \in \ball\left(Q(x,0),r\right),$$
which shows that $Q(x,2\delta)$ is invertible by \eqref{prop inv}.

{\em Step 3:}
Let us now define the cut-off function $\eta$ (see Fig. \ref{fig eta}).
Let us introduce
\begin{multline*}
\Tau_0(\delta) \eqdef \\
\ens{(t,x) \in (0,T)\times(0,L) \quad \middle| \quad x \in (0,b-2\delta), \quad t \in \left(\tau_0(x,a)+\delta,\tau_0(x,b)-\delta\right)},
\end{multline*}
\begin{multline*}
\Tau_{\ceil{\frac{T-a}{L}}}(\delta) \eqdef
\Big\{(t,x) \in (0,T)\times \left(0,L\right), \quad \Big| \\
\quad x \in (p(a)+2\delta,L), \quad t \in \left(\tau_{\ceil{\frac{T-a}{L}}}(x,a)+\delta,\tau_{\ceil{\frac{T-a}{L}}}(x,b)-\delta\right)\Big\},
\end{multline*}
and, for $k \in \inter{1,\ceil{\frac{T-a}{L}}-1}$,
$$
\Tau_k(\delta) \eqdef
\ens{(t,x) \in (0,T)\times(0,L) \quad \middle| \quad t \in \left(\tau_k(x,a)+\delta,\tau_k(x,b)-\delta\right)}.
$$
Let $\xi \in C^1([0,T]\times[0,L])$ be a cut-off function with $0 \leq \xi \leq 1$ and such that (see Fig. \ref{fig trapezium} to help)
\begin{subequations}
\begin{gather}
\xi \equiv 1 \mbox{ in } \bigcup_{k=0}^{\ceil{\frac{T-a}{L}}} \Tau_k(2\delta), \label{xivaut11}\\
\xi \equiv 1 \mbox{ in } \bigcup_{k=0}^{\ceil{\frac{T-a}{L}}-1} \ens{(t,0) \, \middle| \, t \in (\tau_k(0,a)+2\delta,\tau_k(0,b)-2\delta)}, \label{xivaut12}\\
\xi \equiv 1 \mbox{ in } \bigcup_{k=1}^{\ceil{\frac{T-a}{L}}} \ens{(t,L) \, \middle| \, t \in (\tau_{k-1}(0,a)+2\delta,\tau_{k-1}(0,b)-2\delta)}, \label{xivaut13}\\
\xi \equiv 0 \mbox{ in } [0,T]\times[0,L] \backslash \left(\bigcup_{k=0}^{\ceil{\frac{T-a}{L}}} \ext{\Tau_k(\delta)}\right). \label{xi zero}
\end{gather}
\end{subequations}
For every $x \in [0,L]$, let us define
$$
Q_x= \int_{0}^{T}  G(s,x) \xi(s,x) \, ds.
$$
Note that $Q_L=Q_0$.
Let us show that $Q_x$ is invertible for every $x \in [0,L]$.
First, note that the controllability Gramian $Q_x$ is a nonnegative symmetric matrix.
Thus, it is invertible if, and only, if it is positive definite.
Let $v \in \R^n$.
Using $\xi \geq 0$ and \eqref{xivaut11}-\eqref{xi zero}, we have
$$
\begin{array}{rcl}
v \cdot Q_x v &=&\ds \int_0^T \norm{B(s)^*R(T,s)^*v}^2 \xi(s,x)\, ds \\
& =&\ds
\sum_{k=0}^{\ceil{\frac{T-a}{L}}} \int_{\ens{t \in [0,T] \, \middle| \, (t,x) \in \ext{\Tau_k(\delta)}}} \norm{B(s)^*R(T,s)^*v}^2 \xi(s,x) \, ds \\
& \geq &\ds
\sum_{k=0}^{\ceil{\frac{T-a}{L}}} \int_{\ens{t \in [0,T]  \, \middle| \, (t,x) \in \Tau_k(2\delta)}} \norm{B(s)^*R(T,s)^*v}^2 \xi(s,x) \, ds 
= v \cdot Q(x,2\delta) v.
\end{array}
$$
As a result, the positive definiteness of $Q_x$ follows from the one of $Q(x,2\delta)$.

To conclude, it remains to set, for every $(t,x) \in [0,T]\times[0,L]$,
$$\eta(t,x) \eqdef
\left\{\begin{array}{cl}
\xi(t,\excar^{-1}(t,x)) & \mbox{ if }  (t,x) \in \Tau', \\
\xi(t,0) & \mbox{ if }  x=\excar(t,0), \\
0 & \mbox{ if } (t,x) \in \partial \left([0,T]\times[0,L]\right).
\end{array}
\right.
$$
Thanks to \eqref{xi zero}, we have $\eta \in C^1([0,T]\times[0,L])$ and \eqref{eta zero}.
Moreover,
$$\eta\left(t,\excar(t,x)\right)=\xi(t,x), \quad \forall (t,x) \in [0,T]\times[0,L]. \qquad \endproof$$

\bibliographystyle{amsalpha}
\bibliography{biblio}

\end{document}